\theoremstyle{plain}
\newtheorem{theorem}{Theorem}[section]
\newtheorem{lemma}[theorem]{Lemma}
\newtheorem{proposition}[theorem]{Proposition}
\theoremstyle{definition}
\newtheorem{remark}[theorem]{Remark}
\newtheorem{example}[theorem]{Example}
\newcommand{\MM}{\mathcal M}
\newcommand{\BM}{\overline{\mathcal M}}
\newcommand{\CC}{\mathcal C}
\newcommand{\EE}{\mathcal E}
\newcommand{\OO}{\mathcal O}
\newcommand{\BEff}{\overline{\operatorname{Eff}}}
\newcommand{\wt}{\widetilde}
\newcommand{\irr}{\operatorname{irr}}
\newcommand{\Pic}{\operatorname{Pic}}
\newcommand{\Bl}{\operatorname{Bl}}
\newcommand{\bbF}{\mathbb F}
\newcommand{\bbP}{\mathbb P}
\newcommand{\bbZ}{\mathbb Z}
\title{Extremal effective divisors of Brill-Noether and Gieseker-Petri type in $\BM_{1,n}$}
\date{\today}
\author{Dawei Chen}
\address{Department of Mathematics, Boston College, Chestnut Hill, MA 02467}
\author{Anand Patel}
\email{dawei.chen@bc.edu, anand.patel@bc.edu}
\thanks{During the preparation of this article the first author was partially supported by the NSF grant DMS-1200329
and the NSF CAREER grant DMS-1350396.}
\begin{document}

\begin{abstract}
We show that certain divisors of Brill-Noether and Gieseker-Petri type span extremal rays of the effective cone in the moduli space of stable genus one curves with $n$ ordered marked points. In particular, they are different from the infinitely many extremal rays found in \cite{ChenCoskun}. 
\end{abstract}

\maketitle

\setcounter{tocdepth}{1}

\tableofcontents

\section{Introduction}
\label{sec:intro}

Let $\BM_{g,n}$ be the Deligne-Mumford moduli space of stable genus $g$ curves with $n$ ordered marked points. Denote by $\BEff(\BM_{g,n})$ the cone 
of pseudoeffective divisors on $\BM_{g,n}$. Understanding the structure of $\BEff(\BM_{g,n})$ plays a central role in the study of the birational geometry 
of $\BM_{g,n}$, see e.g.~\cite{HarrisMumfordKodaira, HarrisKodaira, EisenbudHarrisKodaira, FarkasKoszul, LoganKodaira, Vermeire, CastravetTevelev}. 

In \cite[Theorem 1.1]{ChenCoskun}, it was shown that there exist infinitely many extremal effective divisors in $\BM_{1,n}$ for each $n\geq 3$. It provides the first (and the only) known example of $\BM_{g,n}$ whose pseudoeffective cone is \emph{not} finitely generated. Recall the definition of those divisors. Let ${\bf a} = (a_1, \ldots, a_n)$ be a collection of $n$ integers satisfying that $\sum_{i=1}^n a_i = 0$, not all equal to zero. Define $D_{\bf a}$ in $\BM_{1,n}$ as the closure of the divisorial locus parameterizing smooth genus one curves with $n$ ordered marked points $(E; p_1, \ldots, p_n)$ such that $\sum_{i=1}^n a_i p_i \sim 0$ in $E$. For $n\geq 3$ and $\gcd (a_1, \ldots, a_n) = 1$, $D_{\bf a}$ spans an extremal ray 
of $\BEff(\BM_{1,n})$. 

A natural question is \emph{whether the $D_{\bf a}$ (and the boundary components) span all (rational) extremal rays of $\BEff(\BM_{1,n})$}? This is a meaningful question and one might expect an affirmative answer, as we explain in the following example. Consider the abelian surface 
$E\times E$, where $E$ is a general smooth elliptic curve with $p_0$ as the origin. Take $a_1, a_2 \in \bbZ$ such that they are relatively prime. Consider the locus  
$$C = \{(p_1, p_2)\in E\times E\ | \ a_1 p_1 + a_2 p_2 \sim (a_1+a_2)p_0 \}.$$ 
We know that $C$ spans an extremal ray 
of $\BEff(E\times E)$. Moreover, all (rational) extremal rays of $\BEff(E\times E)$ are spanned by such $C$, see \cite[II 4.16]{Kollar}. 
Note that $C$ is an analogue of $D_{\bf a}$ with ${\bf a} = (a_1, a_2, -a_1 - a_2)$ when we fix the moduli of a genus one curve with three marked points. 

Nevertheless, the main result of this paper shows that the above question has a negative answer. 

\begin{theorem}
\label{thm:main}
For every $n\geq 6$, there exist extremal effective divisors in $\BM_{1,n}$ that are different from the $D_{\bf a}$'s. 
\end{theorem}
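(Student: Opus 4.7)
The plan is to construct, for each $n\ge 6$, an explicit irreducible effective divisor $D\subset\BM_{1,n}$ defined by a Brill-Noether or Gieseker-Petri type degeneracy of a natural linear series on the marked elliptic curve, and to prove extremality of the ray $\bbR_{\ge 0}[D]$ by the covering-curve technique pioneered in \cite{ChenCoskun}. At the end, $D$ is distinguished from every $D_{\bf a}$ by comparing classes in $\Pic(\BM_{1,n})_\bbQ$.

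For the construction I would fix a natural line bundle $L=\OO_E(\sum a_i p_i)$ of small positive degree, built from some of the marked points, and an associated map whose rank drops in codimension one, such as a multiplication map $\mu_L\colon \operatorname{Sym}^2 H^0(E,L)\to H^0(E,L^{\otimes 2})$ or an evaluation $H^0(E,L)\to\bigoplus_j\OO_{p_j}$ at the remaining marked points. The closure in $\BM_{1,n}$ of the corresponding degeneracy locus is the candidate $D$, whose nonemptiness, reducedness and irreducibility over $\MM_{1,n}$ must be verified by exhibiting a transversal member and running a monodromy or connectedness argument on the universal family. Applying Porteous' formula to the degeneracy cycle on the universal curve, together with the Hodge-bundle contribution coming from the determinants of the relevant pushforwards, yields an expression
$$[D]=a\,\lambda+\sum_i b_i\,\psi_i-c\,\delta_\irr-\sum_{i,S}c_{i,S}\,\delta_{i,S}$$
in $\Pic(\BM_{1,n})_\bbQ$ with explicit coefficients.

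For extremality I would construct a one-parameter family $\pi\colon\XX\to B$ of stable $n$-pointed genus one curves satisfying the defining Brill-Noether/Petri condition, so that the induced morphism $B\to\BM_{1,n}$ factors through $D$ and its image covers a dense open subset of $D$. A natural candidate fixes $(E;p_2,\ldots,p_n)$ generic and lets $p_1$ move along the curve inside $E$ cut out by the degeneracy condition; alternatively one can deform the $j$-invariant of $E$ in a suitable Lefschetz pencil on a surface. Computing $B\cdot\lambda$, $B\cdot\psi_i$, $B\cdot\delta_\irr$ and $B\cdot\delta_{i,S}$ via Grothendieck-Riemann-Roch and explicit counts of nodal specialisations, and pairing with the class computed above, should give $B\cdot D<0$. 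Since $B$ is irreducible and sweeps out $D$, the standard covering-curve argument then forces $\bbR_{\ge 0}[D]$ to be extremal in $\BEff(\BM_{1,n})$.

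Finally, to see that $D$ is not proportional to any $D_{\bf a}$, one compares the class above with those recorded in \cite{ChenCoskun}, where the $\psi_i$-coefficients depend rigidly on $a_i^2$ together with a fixed $\lambda$-coefficient, and checks that the structural dependence on the indices coming from the Porteous class cannot match any such tuple. The main obstacle is the boundary analysis underlying the two computations above: one has to interpret the Brill-Noether/Petri degeneracy correctly on nodal and reducible limit curves so that $c$, $c_{i,S}$ and the intersections $B\cdot\delta_{\ast}$ are pinned down with the correct signs, since an error at the boundary would destroy the required negativity of $B\cdot D$, and with it the entire extremality argument.
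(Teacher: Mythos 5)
There is a genuine gap, and it is in the very first step: the construction of the divisor $D$. You propose to define $D$ by a rank-degeneracy condition on a linear series $L=\OO_E(\sum a_ip_i)$ \emph{on the elliptic curve itself} (a multiplication map $\operatorname{Sym}^2H^0(L)\to H^0(L^{\otimes 2})$ or an evaluation map at the remaining marked points). But on a genus one curve every line bundle of positive degree is nonspecial, elliptic normal curves of degree $\geq 3$ are projectively normal, and a line bundle is determined by its degree together with its Abel--Jacobi class. Consequently any such degeneracy condition either has constant rank (hence cuts out nothing divisorial) or reduces, via Riemann--Roch, to a linear equivalence $\sum_i a_ip_i\sim \sum_j b_jp_j$ among the marked points --- which is precisely the definition of a divisor $D_{\bf a}$. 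Your construction therefore cannot produce anything outside the family you are trying to escape; this is exactly why the question answered by the theorem is nontrivial.

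The missing idea is to impose the Brill--Noether or Gieseker--Petri condition not on $E$ but on a curve of \emph{higher} arithmetic genus built from $(E;p_1,\ldots,p_{2m})$: the paper glues $p_{2i-1}$ to $p_{2i}$ for $i=1,\ldots,m$ to get an $m$-nodal curve of arithmetic genus $m+1$, i.e.\ a morphism $\pi:\BM_{1,2m}\to\BM_{m+1}$, and takes the main component of $\pi^{-1}$ of the trigonal divisor $BN^1_3\subset\BM_5$, the $d$-gonal divisors $BN^1_d\subset\BM_{2d-1}$, or the Gieseker--Petri divisor in $\BM_4$. Gonality of the glued nodal curve is a genuinely new condition on the configuration (e.g.\ for $BN^1_3$ it says $E$ embeds as a plane cubic with the four lines $\overline{p_1p_2},\ldots,\overline{p_7p_8}$ concurrent), not expressible as a single linear equivalence. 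Your remaining steps (class computation, a moving curve $B$ with $B\cdot D<0$, the covering-curve criterion of \cite{ChenCoskun}) do match the paper's strategy, though the paper distinguishes the new divisors from the $D_{\bf a}$ not by comparing classes but by comparing supports: both divisors are rigid, so it suffices to note that the support of the new divisor is invariant under permuting the pairs $\{p_{2k-1},p_{2k}\}$ while no relation $\sum a_ip_i\sim 0$ with $\gcd(a_i)=1$ has that symmetry. Without the gluing construction, however, there is no divisor to which these later steps can be applied, so the argument as proposed does not go through.
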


See Theorems~\ref{thm:trigonal-extremal}, \ref{thm:d-gonal} and \ref{thm:GP} for a more precise statement. 

Let us explain our method. For a stable genus one curve with $2m$ marked points $(E; p_1, \ldots, p_{2m})$, identify 
$p_{2i-1}$ and $p_{2i}$ as a node for $i= 1, \ldots, m$. We thus obtain an $m$-nodal curve of 
arithmetic genus $m+1$. It induces a gluing morphism $\pi: \BM_{1, 2m}\to \BM_{m+1}$. We will show that pulling back certain divisors of Brill-Noether and Gieseker-Petri type by $\pi$ gives rise to extremal effective divisors 
different from the $D_{\bf a}$'s. To verify extremality, we exhibit a moving curve in (the main component of) the pullback divisor such that the curve has negative intersection number with the divisor. This idea was also used in \cite{OpieExtremal} to construct extremal effective divisors in $\BM_{0,n}$ that are different from the hypertree divisors in \cite{CastravetTevelev}.

The paper is organized as follows. In Section~\ref{sec:prelim}, we review basic divisor theory of $\BM_{g,n}$ and carry out the calculation of 
pulling back divisor classes under the gluing map $\pi: \BM_{1,2m}\to \BM_{m+1}$. In Section~\ref{sec:trigonal}, we verify the extremality of the main component of the pullback 
of the Brill-Noether trigonal divisor. In Section~\ref{sec:d-gonal}, we study the pullback of Brill-Noether $d$-gonal divisors for general $d$ and show that their main components are extremal. Finally, in Section~\ref{sec:GP} we study the pullback of the Gieseker-Petri divisor from $\BM_{4}$ and show that its main component is extremal. 

\subsection*{Acknowledgements} We would like to thank Gabriel Bujokas, Ana-Maria Castravet, 
Izzet Coskun, Anand Deopurkar, Maksym Fedorchuk, Joe Harris and Jenia Tevelev for valuable discussions related to this paper. 

\section{Preliminaries on moduli spaces of curves}
\label{sec:prelim}

Denote by $\lambda$ the first Chern class of the Hodge bundle on $\BM_{g,n}$. Let $\Delta_{\irr}$ be the locus in $\BM_{g,n}$ parameterizing curves with a non-separating node. 
For $0\leq i \leq [g/2]$, $S\subset \{1,\ldots, n \}$ and $2i -2 + |S| \geq 0$, let $\Delta_{i;S}$ denote the closure of the locus in $\BM_{g,n}$ that parameterizes nodal curves consisting of two components of genera $i$ and $g-i$, respectively, where
the genus $i$ component contains the marked points labeled by $S$. Denote by $\delta_{\bullet}$ the divisor class of $\Delta_{\bullet}$ and  
let $\delta$ be the class of the union of all boundary divisors on $\BM_{g,n}$. 
Let $\psi_i$ be the first Chern class of the cotangent line bundle on $\BM_{g,n}$ associated to the $i$th marked point for $1\leq i \leq n$. 
These divisor classes are defined on the moduli stack instead of the coarse moduli scheme, see e.g.~\cite{ArbarelloCornalba, HarrisMorrison} for more details.

In this paper, we focus on $\BM_{1,n}$ and $\BM_g$. The rational Picard group of 
$\BM_g$ is generated by $\lambda, \delta_{\irr}, \delta_1, \ldots, \delta_{[g/2]}$, and for $g\geq 3$ these divisor classes 
form a basis. The rational Picard group of $\BM_{1,n}$ has a basis given by $\lambda$ and $\delta_{0;S}$ for $|S| \geq 2$. The divisor classes $\delta_{\irr}$ and $\psi_i$ on $\BM_{1,n}$
can be expressed as 
\begin{eqnarray}
\label{eqn:delta}
\delta_{\irr} = 12 \lambda, 
\end{eqnarray}
\begin{eqnarray}
\label{eqn:psi}
\psi_i = \lambda + \sum_{i\in S} \delta_{0;S}.
\end{eqnarray}
Since $\delta_{\irr}$ and $\lambda$ are proportional on $\BM_{1,n}$, we will use them interchangeably throughout the paper.  
 
For a stable genus one curve with $2m$ marked point $(E; p_1, \ldots, p_{2m})$, identify 
$p_{2i-1}$ and $p_{2i}$ as a node for $i= 1, \ldots, m$. We thus obtain a curve of 
arithmetic genus $m+1$ with $m$ non-separating nodes. This induces a morphism 
$$\pi: \BM_{1, 2m}\to \BM_{m+1}. $$
The image of $\pi$ is contained in $\Delta_{\irr}$. 

Let us calculate the pullback of divisor classes from $\BM_{m+1}$ to $\BM_{1,2m}$ via $\pi$. For $1\leq i \leq m$, define
$$\Lambda_i = \{ S\subset \{1,\ldots,2m\} \ | \ S = \{2k_1-1, 2k_1, \ldots, 2k_i-1, 2k_i\}, 1\leq k_1 < \cdots < k_i \leq m \}. $$
In other words, for $S\in \Lambda$, $S$ contains the labeling of $i$ pairs of marked points that are glued to $i$ nodes. 
Denote by $S^c$ the complement of $S$ in $\{ 1, \ldots, n  \}$. 

\begin{proposition}
\label{prop:pullback}
Under the above setting, we have 
$$ \pi^{*}\lambda = \lambda, $$
$$ \pi^{*}\delta_i = \sum_{S\in \Lambda_i} \delta_{0; S} + \sum_{S^c\in \Lambda_{i-1}} \delta_{0; S}, \quad i <\frac{m+1}{2}, $$
$$ \pi^{*}\delta_i = \sum_{S\in \Lambda_i} \delta_{0; S}, \quad i = \frac{m+1}{2} \ \mbox{for odd}\ m, $$
$$ \pi^{*}\delta = (12 - 2m)\lambda - \sum_{|S|\geq 2} (|S| - 1)\delta_{0;S}. $$
\end{proposition}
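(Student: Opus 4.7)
My plan is to verify the four identities in sequence, with the formula for $\pi^{*}\delta$ being the most involved.

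For $\pi^{*}\lambda=\lambda$, I would factor $\pi$ as an iterated composition of elementary self-clutching maps and appeal to the standard compatibility of the Hodge class with each: the pullback of the Hodge bundle fits into a short exact sequence with the source Hodge bundle and a trivial line bundle (the residue-at-the-new-node factor), so the first Chern classes agree.

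For $\pi^{*}\delta_{i}$, I would first identify $\pi^{-1}(\Delta_{i})$ set-theoretically and then check multiplicity. If $(E;\bar p)\in \pi^{-1}(\Delta_{i})$ with $E$ smooth, then $E$ is the normalization of $\pi(E;\bar p)$ and is connected, so the image has no separating node. Hence $E$ is reducible: writing $E=E_{1}\cup R$ with $E_{1}$ elliptic and $R$ rational, $(E;\bar p)\in\Delta_{0;S}$ where $S$ indexes the marked points on $R$. For the attachment node of $E$ to persist as a separating node of $\pi(E;\bar p)$, no gluing pair $\{2k-1,2k\}$ may be split between $R$ and $E_{1}$; equivalently, $S$ must be a union of pairs. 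In that case $R$ and $E_{1}$ become components of arithmetic genera $|S|/2$ and $m+1-|S|/2$; requiring the split $(i,m+1-i)$ yields $S\in\Lambda_{i}$ or $S^{c}\in\Lambda_{i-1}$. When $i=(m+1)/2$ the two conditions describe the same set of $S$, which accounts for the single-sum form of the formula in that case. Multiplicity one follows from first-order deformation theory: the smoothing parameter of the attachment node of $E$ maps isomorphically to the smoothing parameter of the resulting separating node of $\pi(E;\bar p)$.

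For $\pi^{*}\delta$, I would use Mumford's relation $\delta=12\lambda-\kappa_{1}$ on $\BM_{m+1}$ (with $\kappa_{1}=p_{*}(c_{1}(\omega_{p})^{2})$), reducing the task to computing $\pi^{*}\kappa_{1}$. The pullback of the universal curve on $\BM_{m+1}$ is obtained from the universal curve $p\colon\CC\to \BM_{1,2m}$ by identifying the $m$ pairs of sections $\sigma_{2k-1}\sim\sigma_{2k}$, and its relative dualizing sheaf, pulled back along the birational normalization, is $\omega_{p}(\sigma_{1}+\cdots+\sigma_{2m})$. Combining flat base change with the projection formula gives
\[
\pi^{*}\kappa_{1}=p_{*}\!\left(c_{1}(\omega_{p})+\sum_{i=1}^{2m}\sigma_{i}\right)^{\!2}=\kappa_{1}^{1,2m}+2\sum_{i}\psi_{i}+p_{*}\!\left(\sum_{i}\sigma_{i}\right)^{\!2}.
\]
Since $\sigma_{i}^{2}=-\psi_{i}$ and distinct marked sections on a Deligne--Mumford universal curve are disjoint, so that $\sigma_{i}\cdot\sigma_{j}=0$ for $i\neq j$, the last term reduces to $-\sum\psi_{i}$. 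Substituting Mumford's relation $\kappa_{1}^{1,2m}=-\sum_{S}\delta_{0;S}$ on $\BM_{1,2m}$ (obtained from $12\lambda=\kappa_{1}+\delta$ together with $\delta_{\irr}=12\lambda$) and $\sum_{i}\psi_{i}=2m\lambda+\sum_{S}|S|\,\delta_{0;S}$ yields $\pi^{*}\kappa_{1}=2m\lambda+\sum_{S}(|S|-1)\delta_{0;S}$, hence the claimed formula for $\pi^{*}\delta$.

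The main technical obstacle is the computation of $\pi^{*}\kappa_{1}$: one must carefully exploit the birationality of the normalization $\CC\to\CC_{\mathrm{glued}}$, together with the formula $\nu^{*}\omega_{\mathrm{glued}}=\omega_{p}(\sum\sigma_{i})$, in order to rewrite the pushforward from the glued universal curve as a pushforward from $\CC$; the disjointness of marked sections on $\CC$, a feature specific to the Deligne--Mumford compactification, is then essential to discard all cross terms $\sigma_{i}\sigma_{j}$.
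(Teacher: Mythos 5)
Your proposal is correct. The first identity is proved exactly as in the paper (pushing forward the residue sequence $0\to\Omega\to\Omega(P_1+P_2)\to\OO_{P_1}\oplus\OO_{P_2}\to 0$), and your treatment of $\pi^{*}\delta_i$ --- identifying which $\Delta_{0;S}$ dominate $\pi^{-1}(\Delta_i)$, requiring $S$ to be a union of glued pairs so that the attachment node stays separating, and noting that $S\in\Lambda_i$ and $S^c\in\Lambda_{i-1}$ coincide exactly when $m$ is odd and $i=(m+1)/2$ --- is the paper's argument, with the multiplicity-one check made explicit where the paper leaves it implicit. The only genuine divergence is in $\pi^{*}\delta$: the paper argues directly on a test family, observing that each new node obtained by identifying $P_{2k-1}$ with $P_{2k}$ contributes $f_{*}(P_{2k-1}^2)+f_{*}(P_{2k}^2)=-\psi_{2k-1}-\psi_{2k}$ while pre-existing nodes contribute the old $\delta$, giving $\pi^{*}\delta=\delta-\sum_i\psi_i$ at once; you instead route through Mumford's relation $\delta=12\lambda-\kappa_1$ and a $\kappa_1$-computation on the partially normalized universal curve, using $\nu^{*}\omega_{\mathrm{glued}}=\omega_p(\sum\sigma_i)$ and the disjointness of the marked sections. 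Both approaches land on the same intermediate identity $\pi^{*}\delta=\delta-\sum_{i=1}^{2m}\psi_i$ and finish with $\delta_{\irr}=12\lambda$ and $\psi_i=\lambda+\sum_{i\in S}\delta_{0;S}$. The paper's version is shorter and needs only the standard local description of the $\delta$-class at a node; yours trades that for Grothendieck--Riemann--Roch bookkeeping, with the advantage that the node-contribution formula need not be invoked separately and the vanishing of the cross terms $\sigma_i\sigma_j$ is transparent.
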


\begin{proof}
We can calculate $\pi^{*}\lambda$ by induction on the number of pairs of points that are glued to a node. Take a family $f : \CC \to B$ of stable curves of genus $g-1$ with two disjoint sections $P_1$ and $P_2$. Consider the exact sequence 
$$ 0\to \Omega \to \Omega(P_1 + P_2) \to \OO_{P_1}\oplus \OO_{P_2} \to 0, $$
where $\Omega$ is the relative dualizing sheaf of the family $f: \CC \to B$. 
Applying $f_*$, we obtain that 
$$ 0\to f_{*}\Omega \to f_{*}(\Omega(P_1 + P_2)) \to \OO_{B}\oplus \OO_B\to \OO_B \to 0. $$
It follows that   
$$  \pi^{*}\lambda = c_1(f_{*}(\Omega(P_1 + P_2))) = c_1(f_{*}\Omega) = \lambda. $$

If the image of $(E; p_1, \ldots, p_{2m})\in \Delta_{0;S}$ under $\pi$ lies in $\Delta_i$, 
either the rational tail or the genus one tail  becomes a component of arithmetic genus $i$ after the gluing process, which corresponds to the case $S\in \Lambda_i$ or $S^c \in \Lambda_{i-1}$, respectively. Note that for $1\leq i \leq \frac{m+1}{2}$, $S\in \Lambda_i$ and $S^c \in \Lambda_{i-1}$ hold simultaneously if and only if $m$ is odd and $ i = \frac{m+1}{2}$. 
The two equalities about $\pi^{*}\delta_i$ follow right away. 

Finally, let $f: \EE\to B$ be a family of stable genus one curves with $2m$ sections $P_1, \ldots, P_{2m}$. 
If a node is obtained by identifying $P_{2k-1}$ and $P_{2k}$, its contribution in $\pi^{*}\delta$ is 
$$f_{*}(P_{2k-1}^2) + f_{*}(P_{2k}^2) = - \psi_{2k-1} - \psi_{2k}. $$
If a fiber $E$ in $\EE$ is contained in $\delta_{0;S}$ or $\delta_{\irr}$, a node of $E$ remains to be a node after the gluing process. This implies that 
$$ \pi^{*}\delta  =  \sum_{i=1}^{2m} (-\psi_i) + \delta = (12 - 2m)\lambda - \sum_{|S|\geq 2} (|S| - 1)\delta_{0;S}, $$
 where the relations \eqref{eqn:delta} and \eqref{eqn:psi} are used in the last step.  
\end{proof}

\section{Pulling back the trigonal divisor}
\label{sec:trigonal}

Consider  $\pi: \BM_{1,8}\to \BM_5$. Let $BN^1_3\subset \BM_5$ denote the Brill-Noether trigonal divisor whose general point parameterizes a curve admitting a triple cover of $\bbP^1$. 
By \cite{HarrisMumfordKodaira}, it has divisor class  
\begin{eqnarray*}
BN^1_3 & = & 8 \lambda - \delta_{\irr} - 4\delta_1 - 6 \delta_2 \\
       & = & 8 \lambda - \delta - 3\delta_1 - 5 \delta_2.   
\end{eqnarray*}
By Proposition~\ref{prop:pullback}, we obtain that  
\begin{eqnarray}
\label{eqn:trigonal}
\pi^{*}BN^1_3  =  4\lambda + \sum_{S\not\in \cup \Lambda_i} (|S| - 1)\delta_{0;S} + 4 \delta_{0; \{1, \ldots, 8\}}  \\
  - 2\sum_{k=1}^4 \delta_{0; \{2k-1, 2k \}} - 2\sum_{i<j}  \delta_{0; \{2i-1, 2i, 2j-1, 2j \}} \nonumber. 
\end{eqnarray}

Note that $\pi^{-1}BN^1_3$ may contain some boundary components of $\BM_{1,8}$, see Remark~\ref{rem:trigonal-component}. Denote by $\wt{BN}^1_3$ the main component of $\pi^{-1}BN^1_3$, i.e. 
$\wt{BN}^1_3$ is actually the closure of $(\pi^{-1}BN^1_3)\cap \MM_{1,8}$. (See Lemma~\ref{lem:d-gonal-irreducible} 
for the irreducibility of $(\pi^{-1}BN^1_d)\cap \MM_{1,4d-4}$ for general $d$.) 

Let us characterize when $(E; p_1,\ldots, p_8)$ is contained 
in $\wt{BN}^1_3$ for a smooth genus one curve $E$ with eight distinct marked points. 

\begin{proposition}
\label{prop:trigonal-geometry}
In the above setting, $(E; p_1,\ldots, p_8)$ is a general point of $\wt{BN}^1_3$ if and only if $E$ admits an embedding as a plane cubic in $\bbP^2$ such that the four lines 
$\overline{p_{1}p_{2}}$, $\overline{p_{3}p_{4}}$, $\overline{p_{5}p_{6}}$ and $\overline{p_{7}p_{8}}$ are concurrent. 
\end{proposition}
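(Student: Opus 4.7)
The plan is to translate the condition ``the nodal curve $C:=\pi(E;p_1,\ldots,p_8)$ of arithmetic genus $5$ admits a $g^1_3$'' directly into the geometry of a plane cubic embedding of $E$. Since $\wt{BN}^1_3$ is defined as the closure of $(\pi^{-1}BN^1_3)\cap\MM_{1,8}$, a general point lies in $\MM_{1,8}$, and we may assume $E$ is smooth and $p_1,\ldots,p_8$ are distinct; the task reduces to characterizing when the resulting four-nodal genus-$5$ curve $C$ is trigonal.

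For the forward direction, assume $C$ admits a $g^1_3$ given by a morphism $f:C\to\bbP^1$ of degree $3$. Composing with the normalization $\nu:E\to C$ produces a degree-$3$ morphism $f\circ\nu:E\to\bbP^1$, corresponding to a pair $(L,V)$ with $L=(f\circ\nu)^{*}\OO_{\bbP^1}(1)$ of degree $3$ on $E$ and $V\subset H^0(E,L)$ two-dimensional. Since $\deg L=3>2g(E)-2=0$, Riemann--Roch on $E$ gives $h^0(E,L)=3$, so the complete linear series $|L|$ embeds $E$ as a smooth plane cubic in $\bbP(H^0(E,L)^{*})\cong\bbP^2$. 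The subspace $V$ has a $1$-dimensional annihilator in $H^0(E,L)^{*}$, which determines a point $P\in\bbP^2$, and the map defined by $V$ is exactly linear projection from $P$. Because $f$ is defined on $C$, we must have $f\circ\nu(p_{2i-1})=f\circ\nu(p_{2i})$ for each $i$, which says precisely that $p_{2i-1}$, $p_{2i}$ and $P$ are collinear in $\bbP^2$. Hence the four lines $\overline{p_{2i-1}p_{2i}}$ all pass through $P$.

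Conversely, suppose $E\hookrightarrow\bbP^2$ is a smooth plane cubic embedding in which the four lines $\overline{p_{2i-1}p_{2i}}$ meet at a common point $P$. For a generic configuration $P\notin E$, so linear projection from $P$ restricts to a degree-$3$ morphism $E\to\bbP^1$ that identifies $p_{2i-1}$ with $p_{2i}$ for each $i$ by the concurrency hypothesis. By the universal property of the normalization, this morphism factors through a degree-$3$ morphism $C\to\bbP^1$, exhibiting the desired $g^1_3$ on $C$. Therefore $(E;p_1,\ldots,p_8)\in(\pi^{-1}BN^1_3)\cap\MM_{1,8}\subset\wt{BN}^1_3$.

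The main obstacle is justifying the ``general point'' language, i.e., showing that the configurations just described form a dense open subset of $\wt{BN}^1_3$. A dimension count provides a consistency test: the space of plane cubic embeddings with $8$ ordered marked points, modulo $\PGL_3$, fibers over $\MM_{1,8}$ with $1$-dimensional fibers coming from the choice of $L\in\Pic^3(E)$, while the concurrency of four lines in $\bbP^2$ is a codimension-$2$ condition, giving an expected codimension of $2-1=1$ in $\BM_{1,8}$ and matching the divisorial nature of $\wt{BN}^1_3$. To finish, I would invoke (or verify directly, in the spirit of Lemma~\ref{lem:d-gonal-irreducible} below for general $d$) the irreducibility of $(\pi^{-1}BN^1_3)\cap\MM_{1,8}$, so that its closure is precisely $\wt{BN}^1_3$ and the generic point admits exactly the geometry above.
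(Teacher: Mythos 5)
Your proof is correct, and the converse direction coincides with the paper's (projection from the common point $v=P$ gives a $g^1_3$ on $E$ that descends to the $4$-nodal curve). The forward direction, however, takes a genuinely different route. The paper starts from the canonical model of the trigonal genus-$5$ curve $C$, which lies on a cubic surface scroll in $\bbP^4$, and reaches the plane cubic by elementary transformations at the nodes followed by blowing down the hyperplane section spanned by them; the concurrency point arises as the contraction image of that hyperplane section. You instead pull the $g^1_3$ back to the normalization $E$, use Riemann--Roch to see that the degree-$3$ line bundle $L$ is nonspecial with $h^0=3$, embed $E$ by the complete series $|L|$ as a plane cubic, and identify the incomplete pencil $V\subset H^0(E,L)$ as projection from the point $P$ dual to its annihilator; the node conditions $f\circ\nu(p_{2i-1})=f\circ\nu(p_{2i})$ then translate directly into collinearity of $p_{2i-1},p_{2i},P$. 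Your argument is more elementary, avoiding the birational geometry of the cubic scroll, at the cost of the paper's more global picture of how the plane model sits inside the canonical model. Two points to keep in mind, neither fatal and both present at the same level of implicitness in the paper's own proof: you should note that $P\notin E$ in the forward direction as well (it follows because the pencil $V$ defines a morphism of degree $3$, whereas projection from a point of $E$ has degree $2$); and for a general point of the relevant component one is tacitly assuming that the limit linear series on $C$ is an honest base-point-free $g^1_3$ on $C$ itself rather than one requiring normalization at a node. Your closing appeal to the irreducibility statement of Lemma~\ref{lem:d-gonal-irreducible} to justify the ``general point'' phrasing matches what the paper does.
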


\begin{proof}
Identifying $p_{2k-1}, p_{2k}$ in $E$ for $k = 1, 2, 3, 4$, we obtain a curve $C$ of arithmetic genus five with four nodes. 
Suppose that $C$ is a trigonal curve. Then the canonical model of $C$ is contained in a cubic surface in $\bbP^4$. Perform elementary transformations at the nodes of $C$ and blow down the proper transform of the hyperplane section $H$ spanned by the nodes. The cubic surface is transformed to $\bbP^2$ and the image of $C$ is  
the embedding of $E$ as a plane cubic in $\bbP^2$. The exceptional curve containing $p_{2k-1}, p_{2k}$ is transformed to a line spanned by $p_{2k-1}, p_{2k}$ 
for $k = 1, 2, 3, 4$. The four lines are concurrent at a point $v$ arising from the contraction image of $H$. 

Conversely, if $(E; p_1, \ldots, p_8)$ admits such a plane cubic configuration, 
projecting $E$ from $v$ to a line gives rise to a $g^1_3$ on $E$, which descends to a $g^1_3$ on the $4$-nodal curve $\pi(E)$, because $p_{2k-1}$ and $p_{2k}$ map to the same image under the $g^1_3$ for $k=1,2,3,4$. 
\end{proof}

Next we construct a curve $B$ moving in $\wt{BN}^1_3$ such that $B\cdot \wt{BN}^1_3 < 0$. Fix four general concurrent lines $L_1, \ldots, L_4$ in $\bbP^2$ and fix two general points $p_{2k-1}, r_k \in L_k$ for each $k$. Consider 
the pencil $B$ of plane cubics passing through the eight fixed points. Denote by $p_{2k}$ the remaining intersection point of the cubics with $L_k$ for each $k$. See Figure~\ref{fig:cubic} for this configuration. 

\begin{figure}[htb]
    \centering
    \def\svgwidth{200pt}
     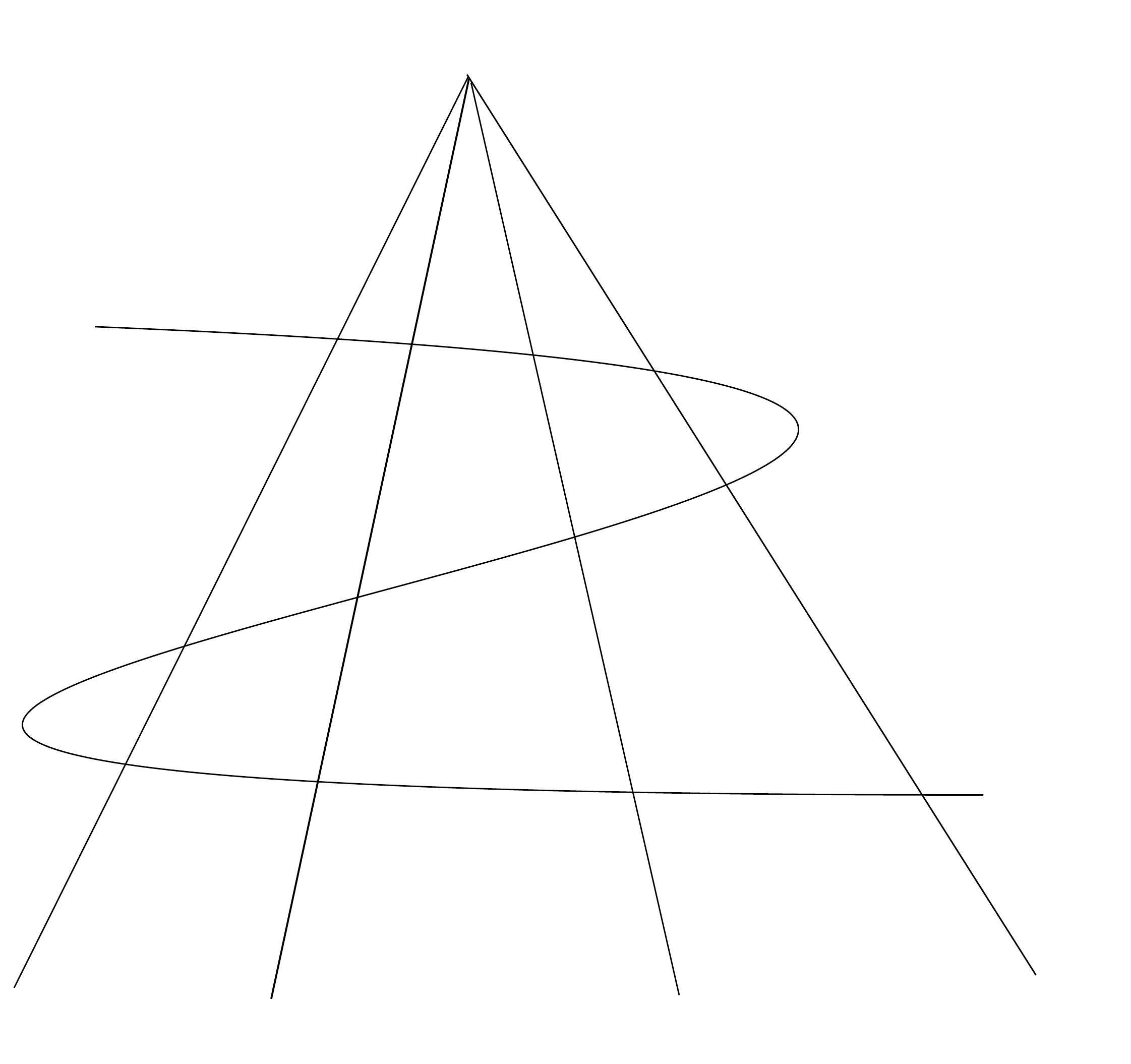
        \caption{\label{fig:cubic} A plane cubic marked at $p_1,\ldots, p_8$ }
    \end{figure}

Marking $p_1,\ldots, p_8$, $B$ can be viewed as a family of genus one curves with eight ordered marked points. By Proposition~\ref{prop:trigonal-geometry}, $B$ is a moving curve in $\wt{BN}^1_3$.  

\begin{lemma}
\label{lem:trigonal-intersection}
On $\BM_{1,8}$ we have the following intersection numbers: 
$$ B\cdot \lambda = 1, $$
$$ B\cdot \delta_{0; \{2k-1, 2k \}} = 1, $$
$$ B\cdot \delta_{0; \{2,4,6,8\}} = 1, $$
$$ B\cdot \delta_{0; S} = 0, \quad S\neq \{ 2k-1, 2k \}, \{2,4,6,8\}. $$
\end{lemma}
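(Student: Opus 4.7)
The plan is to realize $B$ as an explicit one-parameter family of $8$-pointed genus one curves by resolving the pencil of plane cubics. Blowing up $\bbP^2$ at the nine base points of the pencil (the eight chosen $p_{2k-1}, r_k$ together with the residual ninth base point) produces a rational elliptic surface $f\colon X\to B$ on which every marked section is visible: the four fixed points $p_{2k-1}$ give the exceptional divisors $E_{2k-1}$, while each varying third intersection $p_{2k}$ is cut out by the strict transform $\tilde L_k$ of $L_k$, both with self-intersection $-1$ on $X$. Since $\chi_{\topo}(X) = 3+9 = 12$ and a dimension count shows that for generic $(L_1,\ldots,L_4,p_1,r_1,\ldots,p_7,r_4)$ the codimension-two locus of reducible cubics in $\bbP^9$ misses the pencil line, all twelve singular fibers are irreducible nodal cubics, so $B \cdot \delta_{\irr} = 12$ and the relation $\delta_{\irr}=12\lambda$ gives $B\cdot\lambda = 1$.

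For the boundary intersections I would determine precisely when two of the eight sections coincide on $X$ and then blow up to reach a family of stable curves over $B$. A direct case analysis leaves exactly two types of coincidence. First, for each $k$ the sections $E_{2k-1}$ and $\tilde L_k$ meet transversely at a single point, lying on the unique pencil member whose tangent line at $p_{2k-1}$ equals $L_k$ (one linear condition on the pencil). Second, the four sections $\tilde L_1,\tilde L_2,\tilde L_3,\tilde L_4$ all pass through the concurrent point $v$, and hence meet over the unique cubic through $v$, with pairwise distinct tangent directions coming from the distinct slopes of the $L_k$. No other coincidences occur: two fixed $p_{2j-1},p_{2k-1}$ are distinct by assumption, a fixed $p_{2j-1}$ meets a varying $p_{2k}$ with $j\ne k$ only if $p_{2j-1}\in L_j\cap L_k=\{v\}$, and two varying $p_{2j},p_{2k}$ meet only at $v$. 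Blowing up the four tangency points together with $v$ separates all sections and yields a stable family: each tangency blowup introduces a rational bridge carrying $\{p_{2k-1},p_{2k}\}$ and contributes $B\cdot\delta_{0;\{2k-1,2k\}}=1$, while the blowup at $v$ introduces a rational bridge carrying $\{p_2,p_4,p_6,p_8\}$ (the four resolved sections hit it at four distinct points) and contributes $B\cdot\delta_{0;\{2,4,6,8\}}=1$; every other $\delta_{0;S}$ is disjoint from $B$.

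The hardest step is the genericity argument: I need to choose the data so that no pencil member is reducible, the five collision fibers are distinct from the twelve nodal fibers, and the residual ninth base point of the pencil avoids $v$ and the given $p_{2k-1},r_k$. These are all open conditions and can be arranged by a dimension count in $\bbP^9$ or by exhibiting a single explicit example. As a sanity check one verifies the identity $\sum_i \psi_i\cdot B = 8(\lambda\cdot B) + \sum_S |S|(\delta_{0;S}\cdot B)$: after the five blowups the section self-intersections become $E_{2k-1}^2=-2$ and $\tilde L_k^2=-3$, so the left-hand side equals $4\cdot 2 + 4\cdot 3 = 20$, matching the right-hand side $8\cdot 1 + (4\cdot 2 + 1\cdot 4) = 20$.
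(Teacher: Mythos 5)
Your proof is correct and follows essentially the same route as the paper: the same degenerate members of the pencil (the member tangent to $L_k$ at $p_{2k-1}$, giving $p_{2k}=p_{2k-1}$, and the member through $v$, which after blowing up $v$ acquires a rational bridge carrying $p_2,p_4,p_6,p_8$) account for the boundary intersections, while the twelve irreducible nodal cubics give $B\cdot\lambda=1$ via $\delta_{\irr}=12\lambda$. Your explicit resolution on the rational elliptic surface, the section-intersection computation showing each coincidence is transverse (hence counts with multiplicity one), and the $\psi$-class cross-check merely make rigorous the multiplicities that the paper asserts ``follow right away.''
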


\begin{proof}
Since $p_{2k}$ varies in $L_k$, when it coincides with the fixed $p_{2k-1}$, we obtain a curve parameterized in $\delta_{0; \{2k-1, 2k \}}$. There is no reducible cubic that can pass through the eight fixed points, hence every curve parameterized by $B$ is an irreducible genus one curve. Moreover, since the locus of cuspidal cubics has codimension two in the total space of plane cubics, it implies that for a general configuration as in Figure~\ref{fig:cubic}, the pencil $B$ does not contain cuspidal cubics. 
Finally, if the cubic passes through the common point $v$ of the $L_k$, then $p_2, p_4, p_6, p_8$ coincide. Since they approach $v$ in different directions,  
blowing up $v$ results in a stable curve contained in $\delta_{0; \{2,4,6,8\}}$. The desired intersection numbers follow right away. 
\end{proof}

Now we can show that $\wt{BN}^1_3$ is an extremal effective divisor. Denote by $f: \BM_{1, n} \to \BM_{1, m}$ the forgetful morphism forgetting the last $n-m$ marked points.

\begin{theorem}
\label{thm:trigonal-extremal}
The divisor $\wt{BN}^1_3$ is an extremal divisor on $\BM_{1,8}$. For every $n\geq 8$, $f^{*}\wt{BN^1_8}$ spans an extremal ray of 
$\BEff(\BM_{1,n})$ that is different from the ray of $D_{\bf a}$. 
\end{theorem}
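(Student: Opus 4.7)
The strategy is to prove $B \cdot \wt{BN}^1_3 < 0$ for the covering curve $B$ constructed above, invoke the standard moving-curve criterion for extremality, extend the conclusion to $n > 8$ via the forgetful morphism, and finally distinguish the resulting ray from every $D_{\bf a}$-ray.

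For the intersection, I combine Lemma~\ref{lem:trigonal-intersection} with formula~\eqref{eqn:trigonal}. The only nonzero contributions come from $4\lambda$, from the coefficient-$3$ term $3\delta_{0;\{2,4,6,8\}}$ (since $\{2,4,6,8\}\notin\cup\Lambda_i$, so the set appears in $\sum_{S\notin\cup\Lambda_i}(|S|-1)\delta_{0;S}$), and from the four $-2\delta_{0;\{2k-1,2k\}}$ terms, yielding $B\cdot\pi^{*}BN^1_3=4+3-8=-1$. Since $\pi^{*}BN^1_3=\wt{BN}^1_3+\Gamma$ with $\Gamma$ an effective sum of boundary divisors of $\BM_{1,8}$, and the general point of $B$ lies in $\MM_{1,8}$, one has $B\cdot\Gamma\geq 0$ and hence $B\cdot\wt{BN}^1_3\leq -1<0$.

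Extremality of $[\wt{BN}^1_3]$ in $\BEff(\BM_{1,8})$ then follows from the standard moving-curve argument: for any decomposition $[\wt{BN}^1_3]=\alpha+\beta$ with $\alpha,\beta\in\BEff$, after approximation by effective $\bbQ$-divisors write each summand as $c_i[\wt{BN}^1_3]+[E_i]$ with $E_i$ effective and with support not containing $\wt{BN}^1_3$; since $B$ is a covering curve of $\wt{BN}^1_3$, each $B\cdot E_i\geq 0$, forcing $c_1+c_2=1$ and $E_1=E_2=0$. For $n>8$, I lift $B$ to a curve $\tilde{B}\subset\BM_{1,n}$ by augmenting the family of plane cubics underlying $B$ with $n-8$ general sections; the projection formula yields $\tilde{B}\cdot f^{*}\wt{BN}^1_3=B\cdot\wt{BN}^1_3<0$, and varying the extra sections together with the pencil makes $\tilde{B}$ a covering curve of $f^{*}\wt{BN}^1_3$, so the same argument gives extremality.

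To distinguish the ray from the $D_{\bf a}$-rays, suppose $f^{*}\wt{BN}^1_3=cD_{\bf a}$. Restricting to a general fiber of $f$, on which $f^{*}\wt{BN}^1_3$ is trivial, forces $a_i=0$ for $i>8$, reducing to the case $n=8$. The $(\bbZ/2)^4\rtimes S_4$-symmetry of $\wt{BN}^1_3$ (swapping $p_{2k-1}\leftrightarrow p_{2k}$ and permuting the four pairs) then forces ${\bf a}$ to be a scalar multiple of $(1,-1,1,-1,1,-1,1,-1)$, the unique primitive antisymmetric-and-pair-symmetric tuple. Finally, using $p_{2k-1}+p_{2k}+r_k\sim H$ on each cubic of the pencil, the condition defining $D_{(1,-1,\ldots,-1)}$ translates into a non-trivial torsion condition on the pencil modulus, so $B\cdot D_{(1,-1,\ldots,-1)}$ is a non-negative finite number, contradicting proportionality with $B\cdot\wt{BN}^1_3<0$. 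The main obstacle I foresee is this last step, in particular the clean fiberwise argument yielding $a_i=0$ for $i>8$ and the verification that the resulting torsion condition on the pencil is genuinely non-vacuous; everything else is a fairly mechanical assembly of Lemma~\ref{lem:trigonal-intersection} and Proposition~\ref{prop:pullback}.
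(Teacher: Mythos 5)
Your computation of $B\cdot\pi^{*}BN^1_3=4+3-8=-1$, the deduction $B\cdot\wt{BN}^1_3<0$ from effectivity of the boundary residual, the moving-curve criterion, and the passage to $n>8$ via sections and the projection formula all match the paper's argument (the paper cuts $f^{-1}B$ with very ample divisors rather than adding sections, but this is the same idea). The genuine problem is in your last paragraph. The divisor $\wt{BN}^1_3$ is invariant under \emph{each individual} transposition $p_{2k-1}\leftrightarrow p_{2k}$, since the gluing that produces the $4$-nodal trigonal curve does not order the two branches of a node. Imposing invariance of $D_{\bf a}$ (up to the sign ambiguity $D_{\bf a}=D_{-\bf a}$) under the single swap $p_1\leftrightarrow p_2$ forces either $a_1=a_2$, or else $a_2=-a_1$ together with $a_j=-a_j=0$ for all $j\geq 3$; it never yields your ``antisymmetric'' normal form. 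Your candidate $(1,-1,1,-1,1,-1,1,-1)$ is sent by the swap $p_1\leftrightarrow p_2$ alone to $(-1,1,1,-1,1,-1,1,-1)$, which is neither ${\bf a}$ nor $-{\bf a}$, so it is not invariant under the group you yourself invoke. The correct outcome of the symmetry analysis is that each pair satisfies $a_{2k-1}=a_{2k}=\pm 1$ with two pairs of each sign (the degenerate branch being killed by the $S_4$-action), and then a transposition in $S_4$ exchanging a positive pair with a negative pair already gives the contradiction --- no admissible ${\bf a}$ survives, and no further geometric input is needed.

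This matters because your proposed finish --- translating $D_{(1,-1,\ldots)}$ into a ``torsion condition on the pencil modulus'' and checking $B\cdot D_{\bf a}\geq 0$ --- is exactly the step you flag as uncertain, and it is aimed at a tuple that the symmetry argument should already have eliminated. (It is also not quite enough as stated: $B\cdot D_{\bf a}\geq 0$ requires $B\not\subset D_{\bf a}$, which you would still have to verify for the specific pencil of cubics.) Redo the symmetry step using the individual within-pair swaps and the pair permutations separately, as in the paper's proof of Theorem~\ref{thm:trigonal-extremal}, and the final paragraph closes without any torsion computation. One further small point: before comparing supports you should note, as the paper does, that both $f^{*}\wt{BN}^1_3$ and $D_{\bf a}$ are rigid, so that spanning the same ray forces equality of supports; your fiberwise restriction giving $a_i=0$ for $i>8$ is then fine.
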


\begin{proof}
Write $\pi^{*}BN^1_3 = \wt{BN}^1_3 + \overline{BN}^1_3$, where $\overline{BN}^1_3$ is the union of boundary divisors contained in $\pi^{-1}BN^1_3$. 
Using Lemma~\ref{lem:trigonal-intersection} and \eqref{eqn:trigonal}, a routine calculation shows that $B\cdot \pi^{*}BN^1_3 = -1 <  0$. Since 
$B$ is not entirely contained in the boundary of $\BM_{1,8}$, it implies that $B\cdot \overline{BN}^1_3 \geq 0$, hence 
$B\cdot  \wt{BN}^1_3 < 0$. By \cite[Lemma 4.1]{ChenCoskun}, we know that $ \wt{BN}^1_3$ is extremal and rigid on $\BM_{1,8}$. 

For $n\geq 8$, take $n-8$ general very ample divisors $H_i$ on $\BM_{1,n}$ and use them to cut out a curve $B'$ 
in $f^{-1}B$. Then the class of $f_{*}B'$ is a positive multiple of $B$. By the projection formula, 
$B' \cdot f^{*}\wt{BN}^1_3 = (f_{*}B')\cdot \wt{BN}^1_3 < 0$. 
Moreover, varying $B$ in $\wt{BN}^1_3$ and $H_i$ in $\BM_{1,n}$, it follows that $B'$ is a moving curve in 
$f^{-1}\wt{BN}^1_3$. Therefore,  $f^{*}\wt{BN}^1_3$ is extremal and rigid on $\BM_{1,n}$. 
 
Recall that the divisor $D_{\bf a} = D_{\bf -a}$ on $\BM_{1,n}$ parameterizes $(E; p_1, \ldots, p_n)$ 
where $\sum_{i=1}^n a_i p_i \sim 0$ in $E$, $\sum_{i=1}^n a_i = 0$ and $\gcd(a_1,\ldots, a_n) = 1$. 
Since $D_{\bf a}$ and $f^{-1}\wt{BN}^1_3$ are rigid, in order to prove that they span different extremal rays, it suffices to show that they have different supports. If $f^{-1}\wt{BN}^1_3$ and $D_{\bf a}$ are set-theoretically the same, then $a_i = 0$ for $ i > 8$ since there is no constraint imposed 
to $p_i$ for $i > 8$ in the definition of $\wt{BN}^1_3$. Moreover, by the symmetry between the four pairs of nodes $\{ p_{2k-1}, p_{2k}\}$ , we conclude that $\{a_{2k-1}, a_{2k}\} = \{ c, c\}$ or $\{ -c, -c\}$ for $1\leq k\leq 4$. It follows that $c = 1$ and without loss of generality, say, 
$a_1 = a_2 = a_3 = a_4 = 1$, $a_5 = a_6 = a_7 = a_8 = -1$ (up to reordering the four nodes). Nevertheless, 
the resulting relation $p_1 + p_2 + p_3 + p_4 \sim p_5 + p_6 + p_7 + p_8$ is not invariant under the symmetry between 
 $\{ p_{3}, p_{4}\}$ and $\{ p_{5}, p_{6}\}$, leading to a contradiction. 
\end{proof}

\begin{remark}
\label{rem:trigonal-component}
We claim that $\overline{BN}^1_3$ is nonempty, i.e. $\pi^{-1}BN^1_3$ consists of the main component $\wt{BN}^1_3$ as well as some boundary divisors. For example, take a pencil of plane cubics and mark a base point as $p_8$. Attach it to $\bbP^1$ at another base point 
 and mark seven general points in $\bbP^1$ as  $p_1,\ldots, p_7$. We obtain a curve $C$ moving in $\delta_{0 ; \{1,\ldots, 7 \}}$ with the following intersection numbers: 
$$ C\cdot \lambda = 1, $$
$$ C\cdot \delta_{0; \{1, \ldots, 7\}} = -1, $$
$$ C\cdot \delta_{0; S} = 0, \quad S\neq  \{1, \ldots, 7\}. $$
It follows that $C\cdot \pi^{*} BN^1_3 < 0$, hence $\pi^{*}BN^1_3$ contains 
$\Delta_{0; S}$ for any $|S| = 7$. It would be interesting to calculate directly the class of the main 
component $\wt{BN}^1_3$. 
\end{remark}


\section{Pulling back the $d$-gonal divisor}
\label{sec:d-gonal}

Consider $\pi: \BM_{1,4d-4} \to \BM_{g}$, where $g = 2d-1$. Let $BN^1_d\subset \BM_g$ be the Brill-Noether divisor parameterizing $d$-gonal curves. 
By \cite{HarrisMumfordKodaira}, it has class 
\begin{eqnarray}
\label{eq:d-gonal}
BN^1_d & = & c \left((2d+2)\lambda - \frac{d}{3}\delta_0 - \sum_{i=1}^{d-1}i (2d-1-i)\delta_i\right) \\ \nonumber 
                & = & c \left((2d+2)\lambda - \frac{d}{3}\delta - \sum_{i=1}^{d-1}  \Big(i (2d-1-i) - \frac{d}{3}\Big)\delta_i \right), 
\end{eqnarray}
where $c = \frac{3(2d-4)!}{d!(d-2)!}$.  In this section we will show that the main component of $\pi^{*}BN^{1}_{d}$ is extremal.

Consider the surface $S = E\times \bbP^1$, where $E$ is a smooth genus one curve. Let $\pi_0$ and $\pi_1$ be the projections to $\bbP^1$ and $E$, respectively. 
We have 
$$ \Pic(S) \cong \bbZ[e] \oplus \pi_1^{*}\Pic(E), $$
where $e = \pi_0^{*} \OO_{\bbP^1}(1)$ represents the genus one fiber class. Let $D$ be a divisor of degree $d$ on $E$. 
Projecting a curve in the linear system $|e + \pi_1^{*}D|$ via $\pi_0$ admits a degree $d$ cover of $\bbP^1$. Take $2d-2$ general genus one fibers $E_1, \ldots, E_{2d-2}$ and fix a general point $p_{2k-1}\in E_k$ for $k=1,\ldots, 2d-2$. 
Since $\dim |e + \pi_1^{*}D| = 2d-1$, we obtain a pencil $B$ of curves $C_b$ in $S$ that pass through all the fixed $p_{2k-1}$. 
Denote by $p_{2k}$ an intersection point of $C_b$ with $E_i$ other than $p_{2k-1}$. Then $(C_b; p_1, \ldots, p_{4d-4})$ 
is a genus one curve with $4d-4$ marked points, hence $B$ can be viewed as a curve in $\BM_{1,4d-4}$. Since $p_{2k-1}$ and $p_{2k}$ are 
both contained in $E_k$, projecting to $\bbP^1$ realizes $\pi(E, p_1,\ldots, p_{4d-4})$ as a $d$-gonal curve. 

Note that $C_b$ has the same $j$-invariant 
as that of $E$, because it admits a one-to-one map to $E$ via $\pi_1$. It implies that 
$$ B\cdot \lambda = B\cdot \delta_{\irr} = 0. $$

Since $(e + \pi_1^{*}D)^2 = 2d$, besides $p_1, p_3, \ldots, p_{4d-5}$ there are two other base points $q_1$ and $q_2$ in the pencil $B$. Hence there are 
$2d$ singular curves in $B$, each of which consists of a genus zero fiber passing through $s = p_{2k-1}$ or $s = q_i$ union a (unique) genus one curve 
in $|e + \pi_1^{*}(D-s)|$ passing through the remaining base points.

\begin{example}
\label{ex:d=3}
Consider $d = 3$ and $\pi: \BM_{1,8} \to \BM_{5}$. Make a base change of degree $2^4$, still denoted by $B$, so that we can distinguish 
the two remaining intersection points of $E_{k}$ and $C_b$ besides $p_{2k-1}$ for $k=1,2,3,4$. See Figure~\ref{fig:trigonal} for the configuration. 

\begin{figure}[htb]
    \centering
    \def\svgwidth{200pt}
     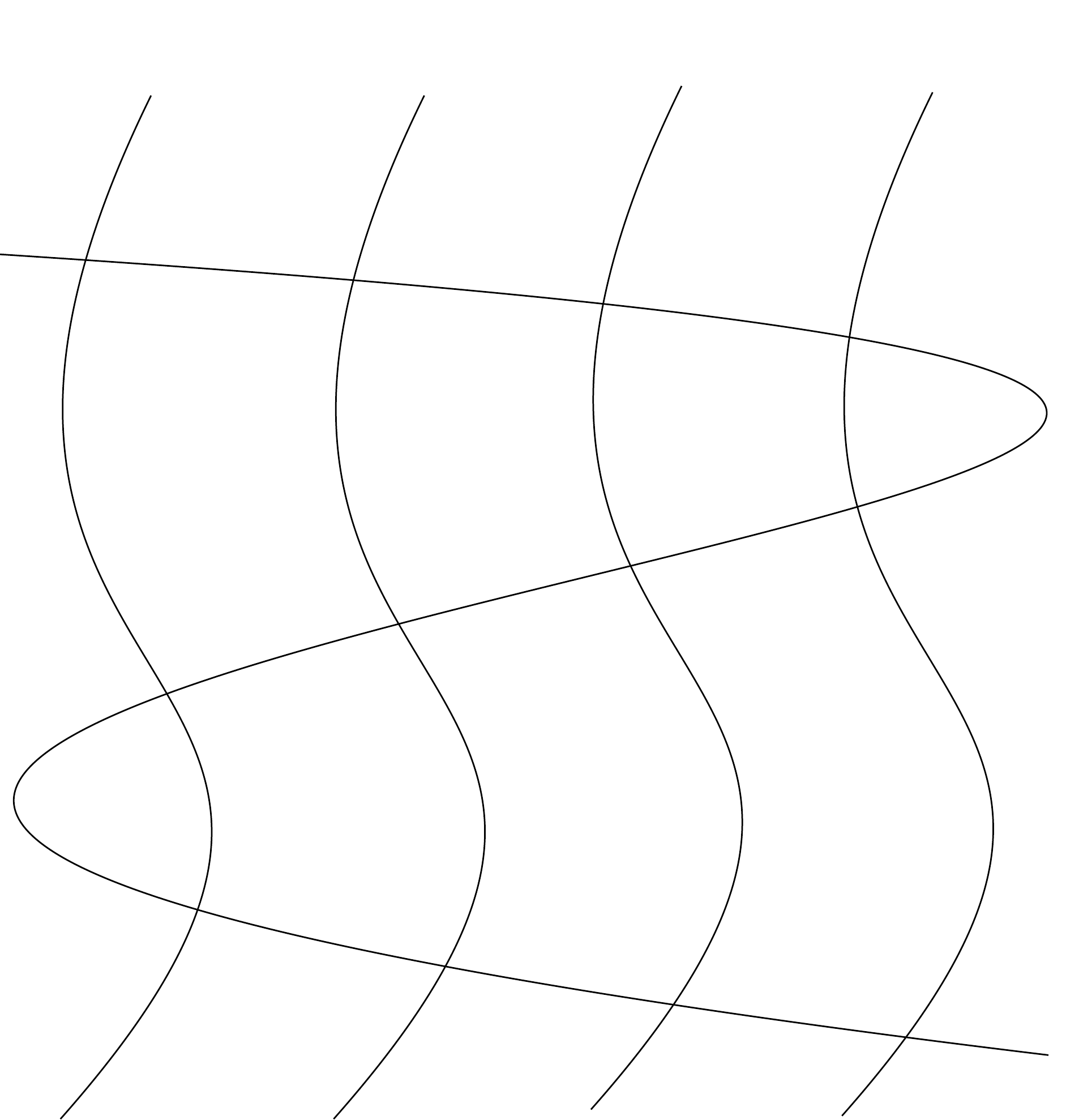
        \caption{\label{fig:trigonal} A genus one curve marked at $p_1,\ldots, p_8$ }
    \end{figure}

If $p_{2k}$ coincides with $p_{2k-1}$, it contributes 
$2\cdot 2^3 = 2^4$ to $B\cdot \delta_{0; \{2k-1, 2k\}}$ (due to the base change). For a singular curve in $B$, if $s = q_i$ for $i=1,2$, it contributes 
$1$ to $\delta_{0; S}$ where $S\subset \{2,4,6,8\}$. If $s = p_{2k-1}$, then it contributes $2$ to $\delta_{0;S\cup \{2k-1\}}$ for $S\subset \{2,4,6,8\}\backslash\{2k\}$, where the number $2$ comes from the choice of $p_{2k}$. 

We thus obtain that the only nonzero intersections of $B$ with boundary divisors are as follows: 
$$ B\cdot \delta_{0;\{p_{2k-1}, p_{2k}\}} = 16, $$
$$ B\cdot \delta_{0;S} = 2, \quad S\subset \{2,4,6,8\}, $$
$$ B\cdot \delta_{0;S\cup \{2k-1\}} = 2, \quad S\subset \{2,4,6,8\}\backslash\{2k\}. $$
It follows that 
$$ B\cdot \pi^{*}BN^1_3 = -128 + 2(1\cdot 6 + 2\cdot 4 + 3\cdot 1) + 2\cdot 4\cdot (1\cdot 3 + 2\cdot 3 + 3\cdot 1) 
= 2 > 0. $$
\end{example}

Nevertheless, for $d\geq 4$ the above intersection number turns out to be negative. 

\begin{lemma}
\label{lem:d-gonal}
In the above setting, $B\cdot \pi^{*}BN^1_d < 0$ for $d\geq 4$. 
\end{lemma}

\begin{proof}
The calculation is similar to Example~\ref{ex:d=3}. Make a degree $(d-1)^{2d-2}$ base change, so that we can distinguish the marked point $p_{2k}$ out of the remaining $d-1$ intersection points of $E_k$ and $C_b$ for $k=1,\ldots, 2d-2$. If 
$p_{2k}$ coincides with $p_{2k-1}$, it contributes $(d-1)(d-1)^{2d-3} = (d-1)^{2d-2}$ 
to $B\cdot \delta_{0; \{2k-1, 2k\}}$. For a singular curve in $B$, if it passes through $s = q_i$ for $i=1$ or $2$, both of them contribute $(d-2)^{2d-2-|S|}$ to $\delta_{0; S}$ for $S\subset \{2,4,\ldots, 4d-4\}$, due to the choice of the marked points in $\{2,4,\ldots, 4d-4\}\backslash S$. If $s = p_{2k-1}$, it contributes 
$(d-1)(d-2)^{2d-3-|S|}$ 
to $\delta_{0;S\cup \{2k-1\}}$ for $S\subset \{2,4,\ldots, 4d-4\} \backslash \{ 2k\}$, due to the choice of $p_{2k}$ and the 
 marked points in $\{2,4,\ldots, 4d-4\} \backslash (\{ 2k\}\cup S)$. 

We thus obtain that
$$ B\cdot \delta_{0;\{p_{2k-1}, p_{2k}\}} = (d-1)^{2d-2}, $$
$$ B\cdot \delta_{0;S} = 2(d-2)^{2d-2-|S|}, \quad S\subset \{2,4,\ldots,4d-4\}, $$
$$ B\cdot \delta_{0;S\cup \{2k-1\}} = (d-1)(d-2)^{2d-3-|S|}, \quad S\subset \{2,4,\ldots, 4d-4\}\backslash\{2k\}. $$
In $\pi^{*}BN^1_d/c$, the coefficients of $\delta_{0;\{2k-1, 2k\}}$, of $\delta_{0;S}$ for 
$S\subset \{2,4,\ldots,4d-4\}$ and of $\delta_{0;S\cup \{2k-1\}}$ for $S\subset \{2,4,\ldots, 4d-4\}\backslash\{2k\}$ 
are 
$$ 2 - \frac{4}{3}d,  \quad \frac{d}{3} (|S|-1), \quad \frac{d}{3} |S|, $$
respectively, by Proposition~\ref{prop:pullback} and \eqref{eq:d-gonal}. It follows that 
\begin{eqnarray*}
 \frac{1}{c}B\cdot \pi^{*}BN^1_d & = & \Big(2-\frac{4}{3}d\Big) 2(d-1)^{2d-1}  + \frac{d}{3} \sum_{s=1}^{2d-2} 2(d-2)^{2d-2-s}(s-1){2d-2 \choose s} \\
& &  + \frac{d}{3}(2d-2) \sum_{s=1}^{2d-3} (d-1)(d-2)^{2d-3-s}s{2d-3\choose s} \\
& = &  \Big(2-\frac{4}{3}d\Big) 2 (d-1)^{2d-1} + \frac{2d}{3}\Big((d-1)^{2d-2} + (d-2)^{2d-2}\Big) \\
& & + \frac{2d}{3} (2d-3)(d-1)^{2d-2} \\
& = & \frac{2}{3}\Big( d(d-2)^{2d-2} - 2 (d-3)(d-1)^{2d-1} \Big). 
\end{eqnarray*}
It is easy to check that $B\cdot \pi^{*}BN^1_d < 0$ for $d \geq 4$. 
\end{proof}

Taking away possible boundary components, denote by $\wt{BN}^1_d$ the main component of $\pi^{-1}BN^1_d$. 

\begin{lemma}
\label{lem:d-gonal-irreducible}
The main component $\wt{BN}^1_d$ is irreducible. 
\end{lemma}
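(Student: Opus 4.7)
The plan is to exhibit $(\pi^{-1}BN^1_d)\cap\MM_{1,4d-4}$ as the image of an irreducible Hurwitz-style parameter space, from which irreducibility of its closure $\wt{BN}^1_d$ follows automatically. Let $\mathcal{F}$ be the moduli space of tuples $(E, f, p_1, \ldots, p_{4d-4})$, where $E$ is a smooth elliptic curve, $f\colon E \to \bbP^1$ is a degree-$d$ map taken up to $\PGL_2$ on the target, and the $p_i$ are pairwise distinct points of $E$ satisfying $f(p_{2i-1}) = f(p_{2i})$ for every $i = 1, \ldots, 2d-2$. The forgetful morphism $\mathcal{F}\to\MM_{1,4d-4}$ has image exactly $(\pi^{-1}BN^1_d)\cap\MM_{1,4d-4}$: a $g^1_d$ on the glued nodal curve $\pi(E;\vec{p})$ pulls back to precisely such a map $f$ on the normalization $E$, and conversely every such $f$ descends to a $g^1_d$ on $\pi(E;\vec{p})$.

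First, the Hurwitz space $\mathcal{H}$ of pairs $(E, f)/\PGL_2$ is irreducible. This is classical, and can also be seen directly: such pairs are parametrized by triples $(E, L, V)$ with $L \in \Pic^d(E)$ and $V \subset H^0(E, L)$ a two-dimensional subspace, which form an irreducible Grassmannian bundle over the irreducible relative Picard scheme $\Pic^d \to \MM_{1,1}$.

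Next, define the universal residual incidence $\mathcal{R} = \{(E, f, p, q) : f(p) = f(q),\ p \neq q\}$ over $\mathcal{H}$. Over a general $(E, f)\in\mathcal{H}$, the first projection exhibits $\mathcal{R}_{(E,f)} \to E$ as a $(d-1)$-sheeted étale cover, which is connected because the monodromy group of a general degree-$d$ map $f\colon E \to \bbP^1$ is the full symmetric group $S_d$, acting transitively on the $d-1$ sheets other than the chosen one. Hence $\mathcal{R}$ is irreducible. The parameter space $\mathcal{F}$ is the open subset of the $(2d-2)$-fold fiber product $\mathcal{R}\times_{\mathcal{H}}\cdots\times_{\mathcal{H}}\mathcal{R}$ on which the $p_i$ are pairwise distinct. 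Since each factor dominates $\mathcal{H}$ with irreducible generic fiber, the iterated fiber product over the irreducible base $\mathcal{H}$ is irreducible.

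The main obstacle is the monodromy statement — that a general $(E, f) \in \mathcal{H}$ has monodromy group $S_d$. This is classical: for a simply branched cover, small loops around the branch points act as transpositions on the sheets, and any transitive subgroup of $S_d$ generated by transpositions is all of $S_d$. With this, irreducibility of $\mathcal{F}$, and hence of its image and of $\wt{BN}^1_d$, follows.
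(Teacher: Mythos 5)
Your argument is correct and reaches the same conclusion as the paper, but by a genuinely different route. The paper likewise fibers the locus over an irreducible space of degree-$d$ genus-one covers, but it quotes the classical irreducibility of the Hurwitz space of simply branched covers \cite{Hurwitz, EEHS} and then proves transitivity on the choices of sheet-pairs by an explicit braid computation, dragging each $q_j$ around the branch points with branch data $(1,2),(1,2),(2,3),\ldots$ so as to normalize every ordered pair $(a_{2j-1},a_{2j})$ to $(1,2)$ and then swap it to $(2,1)$. You replace both steps: irreducibility of the space of covers comes from the Grassmannian-bundle description $(E,L,V)$ over the relative $\Pic^d$, which is more elementary and self-contained in genus one; and the transitivity is packaged as irreducibility of the residual correspondence $\mathcal{R}_{(E,f)} = E\times_{\bbP^1}E\setminus\Delta$ followed by a fiber-product argument, which avoids the explicit sheet-relabeling computation entirely. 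Two points deserve care, though neither is a fatal gap. First, the monodromy group of $\mathcal{R}_{(E,f)}\to E$ is not $S_d$ itself but the image of $\pi_1(E\setminus f^{-1}(B))$, namely the stabilizer $\mathrm{Stab}(p)\cong S_{d-1}$ of the chosen sheet; connectivity of the residual cover therefore requires $2$-transitivity of the full monodromy group, not merely transitivity. Since you establish that the monodromy is all of $S_d$, which is $2$-transitive, the conclusion stands, but your phrasing conflates the two groups. Second, the claim that the image of $\mathcal{F}$ is \emph{exactly} $(\pi^{-1}BN^1_d)\cap\MM_{1,4d-4}$ is an overstatement: since $BN^1_d$ is a closure, membership of the glued nodal curve is an admissible-cover (limit $g^1_d$) condition that need not be realized by an honest map $f$ with $f(p_{2i-1})=f(p_{2i})$, so one still needs density of the honest-cover locus in every component. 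This is the same implicit step the paper takes when it declares $U$ open and dense in $\wt{BN}^1_d$, so your proof is at the same level of rigor as the published one.
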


\begin{proof}
Let $U\subset \wt{BN}^1_d$ be the open dense subset parameterizing simply branched degree $d$, genus one covers 
$f: E\to \bbP^1$ with a general choice of $m$ pairs of points $(p_1, p_2), \ldots, (p_{2m-1}, p_{2m})$ in $E$, where $m = 2d-2$ and $f(p_{2j-1}) = f(p_{2j}) = q_j \in \bbP^1$. It suffices to show that $U$ is irreducible. 

Let $b_1, \ldots, b_{2d}\in \bbP^1$ be the set of branch points of $f$. The branch data $\phi_1, \ldots, \phi_{2d}$ associated to the branch points can be arranged as 
$$(1,2), (1,2), (1,2), (1,2), (2, 3), (2, 3), (3, 4), (3, 4), \ldots, (d-1, d), (d-1, d),$$ 
see e.g. \cite[page 100]{EEHS}. In other words, the monodromy induced by a closed, suitably oriented loop centering around $b_i$ is the permutation $\phi_i \in S_d$. 

The choice of $p_{2j-1}$ and $p_{2j}$ amounts to choosing two distinct numbers $a_{2j-1}$ and $a_{2j}$ out of 
$\{ 1, \ldots, d \}$, i.e. specifying two of the $d$ sheets of $f$ over $q_j$. Without loss of generality, assume that 
$a_{2j-1} < a_{2j}$. Vary $q_j$ along the loops centering around the $b_i$ with branch data $(a_{2j-1}-1, a_{2j-1}), (a_{2j-1}-2, a_{2j-1}-1), 
\ldots, (1, 2)$ successively. When $q_j$ comes back to the original position, $a_{2j-1}$ is transformed to $1$. Next, 
vary $q_j$ along the loops centering around the $b_i$ with branch data $(a_{2j}-1, a_{2j}), (a_{2j}-2, a_{2j}-1), 
\ldots, (2, 3)$ successively. As a result, $a_{2j}$ is transformed to $2$. Finally, if we vary $q_j$ along the loop around $b_1$ with branch datum $(1,2)$, the ordered pair $(1, 2)$ is transformed to $(2,1)$. Meanwhile, the other pairs 
$(a_{2k-1}, a_{2k})$ are unchanged since we did not vary $q_k$ for $k\neq j$. Carrying out this process for 
$j=1,\ldots, m$ one by one, eventually all the pairs $(a_{2j-1}, a_{2j})$ can be transformed to $(1,2)$. 

Let $W$ be the open dense subset of the Hurwitz space of degree $d$, genus one, simply branched covers of $\bbP^1$. The above process implies that the monodromy 
of $U\to W$ is transitive, where the map is finite of degree $(d(d-1))^{m}$ forgetting the $p_i$. 
Since $W$ is irreducible, see \cite{Hurwitz, EEHS}, it follows that $U$ is irreducible.  
\end{proof}

\begin{theorem}
\label{thm:d-gonal}
For $d\geq 4$, $\wt{BN}^1_d$ spans an extremal ray of $\BEff(\BM_{1,4d-4})$ and it is different from the ray of any $D_{\bf a}$. 
\end{theorem}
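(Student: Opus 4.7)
The strategy parallels Theorem~\ref{thm:trigonal-extremal}. I will decompose
$$\pi^{*}BN^1_d = \wt{BN}^1_d + \overline{BN}^1_d,$$
where $\overline{BN}^1_d$ is supported on boundary divisors of $\BM_{1,4d-4}$. The moving curve $B$ constructed in Section~\ref{sec:d-gonal} from the pencil on $S = E\times\bbP^1$ has general member $(C_b; p_1,\ldots, p_{4d-4})$, a smooth genus one curve with $4d-4$ distinct marked points; hence $B$ is not contained in any boundary component, and therefore $B\cdot \overline{BN}^1_d \geq 0$. Combining this with Lemma~\ref{lem:d-gonal} yields $B\cdot \wt{BN}^1_d < 0$.

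Allowing the elliptic curve $E$, the degree $d$ divisor class $D$ on $E$, and the fixed points $p_{2k-1}\in E_k$ to vary, the resulting pencils $B$ sweep out a Zariski open dense subset of $\wt{BN}^1_d$, which is irreducible by Lemma~\ref{lem:d-gonal-irreducible}. Hence $B$ is a moving curve in $\wt{BN}^1_d$, and applying \cite[Lemma 4.1]{ChenCoskun} immediately yields that $\wt{BN}^1_d$ spans an extremal ray of $\BEff(\BM_{1,4d-4})$ and is rigid.

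It remains to distinguish $\wt{BN}^1_d$ from every $D_{\bf a}$, and this is where most of the real work lies. Since both divisors are rigid, spanning the same ray forces their supports to coincide, so I assume $\wt{BN}^1_d = D_{\bf a}$ as sets. The gluing interpretation of $\pi$ makes $\wt{BN}^1_d$ invariant under (i) swapping $p_{2k-1}\leftrightarrow p_{2k}$ within any pair, and (ii) permuting the $2d-2$ pairs $\{p_{2k-1},p_{2k}\}$. Imposing (i) on $D_{\bf a}$ forces $a_{2k-1} = a_{2k} =: c_k$ for every $k$, since the only alternative is a global sign flip ${\bf a}\mapsto -{\bf a}$, which would require all other entries to vanish and is incompatible with the analogous condition applied to a second pair (available since $d\geq 4$). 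Imposing (ii) similarly forces the $|c_k|$ to share a common value $c$, and then $\gcd(a_i)=1$ together with $\sum a_i = 0$ leaves only the case $c=1$ with the $c_k\in\{\pm 1\}$ appearing in equal multiplicity. Swapping one pair with $c_k = +1$ and one pair with $c_l = -1$ then produces a genuinely different linear relation on $(E;p_1,\ldots,p_{4d-4})$, contradicting invariance of $D_{\bf a}$ under (ii). I expect the main obstacle to be this symmetry bookkeeping: ruling out each sign-flip branch cleanly and confirming that the degenerate $\mathbf a$ whose support consists of a single pair cannot coincide with $\wt{BN}^1_d$.
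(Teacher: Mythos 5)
Your proposal is correct and follows the paper's own argument essentially verbatim: negativity of $B\cdot \wt{BN}^1_d$ from Lemma~\ref{lem:d-gonal} plus non-containment of $B$ in the boundary, extremality and rigidity via \cite[Lemma 4.1]{ChenCoskun}, and the same pair-swapping symmetry argument (imported from the proof of Theorem~\ref{thm:trigonal-extremal}) to rule out $D_{\bf a}$. The only difference is that you spell out the symmetry bookkeeping and the movability of $B$ in slightly more detail than the paper does, which is harmless.
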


\begin{proof}
By Lemma~\ref{lem:d-gonal}, $B\cdot \pi^{*}BN^1_d < 0$ and it is not entirely contained in the boundary of $\BM_{1,4d-4}$, hence $B\cdot \wt{BN}^1_d < 0$. Moreover, by the construction of $B$, it is a moving curve in $\wt{BN}^1_d$. It follows that $\wt{BN}^1_d$ spans an extremal ray of $\BEff(\BM_{1,4d-4})$. 
The same argument 
as in the proof of Theorem~\ref{thm:trigonal-extremal} shows that $\wt{BN}^1_d$ and $D_{\bf a}$ span different extremal rays of $\BEff(\BM_{1,4d-4})$. 
\end{proof}

\section{Pulling back the Gieseker-Petri divisor}
\label{sec:GP}

Consider the gluing map $\pi: \BM_{1,6}\to \BM_4$. Let $GP$ denote the Gieseker-Petri divisor in $\BM_{4}$. The divisor $GP$ has several geometric interpretations. Here we view it as the closure of the locus in $\BM_4$
 parameterizing genus four curves whose canonical images are contained in a quadric cone in $\bbP^3$. By \cite[Theorem 2]{EisenbudHarrisKodaira}, it has class  
\begin{eqnarray*}
GP & = & 34\lambda - 4\delta_{\irr} - 14\delta_{1} - 18 \delta_2 \\
      & = & 34\lambda - 4\delta - 10\delta_1 - 14\delta_2. 
\end{eqnarray*}
By Proposition~\ref{prop:pullback}, we obtain: 
\begin{eqnarray}
\label{eqn:gp}
 \pi^{*}GP = 10\lambda + 4\sum_{S\not\in \cup \Lambda_i} (|S| - 1)\delta_{0;S} + 10 \delta_{0;\{1,2,3,4,5,6\}} \\
- 6 \big(\delta_{0;\{1,2\}} +  \delta_{0;\{3,4\}} +  \delta_{0;\{5,6\}}\big) - 2 \big(\delta_{0;\{1,2,3,4\}} + \delta_{0;\{1,2,5,6\}}  + \delta_{0;\{3,4,5,6\}}\big)\nonumber. 
\end{eqnarray}

In principle, $\pi^{-1}GP$ may contain some boundary components of $\BM_{1,6}$, as we saw in Remark~\ref{rem:trigonal-component}. Denote by $\wt{GP}$ the main component of $\pi^{-1}GP$, i.e. 
$\wt{GP}$ is the closure of $(\pi^{-1}GP)\cap \MM_{1,6}$. 

The main result of this section is that $\wt{GP}$ spans an extremal ray of $\BEff(\BM_{1,6})$. 
We prove it by constructing a moving curve $B \subset \wt{GP}$ such that $B \cdot \wt{GP} < 0$.  

We begin by observing that the general element $(E; p_{1}, ... ,p_{6}) \in \wt{GP}$ is obtained as the (marked) normalization of a $3$-nodal elliptic curve $C \subset \bbF_{2}$ where $\bbF_{2}$ is the second Hirzebruch surface.  Let $\sigma$ be the directrix class on $\bbF_{2}$ and let $\tau$ be the section class satisfying $\tau \cdot \sigma = 0$. The class of $C$ is $3\tau$.  We will essentially vary the $3$-nodal curve $C$.  However, it will also turn out to be necessary to ``vary" the ambient surface $\bbF_{2}$.  Let us first construct the variation of the ambient surface. 

Let $Q = \bbP^1 \times \bbP^1$, with projections $\pi_{1}$ and $\pi_{2}$.  Let $f_{1}$ and $f_{2}$ denote the classes 
of the fibers of $\pi_{1}$ and $\pi_{2}$, respectively. Let $\OO_{Q}(m,n)$ denote the line bundle associated to the divisor class $mf_{1} + nf_{2}$. Let 
$V =\OO_{Q} \oplus \OO_{Q}(1,2)$
be a rank two vector bundle on $Q$. Consider the $\bbP^1$-bundle 
\[Y = {\rm Proj}(V),\] 
with its projection $p : Y \to Q.$ Let $\phi: Y \to \bbP^1$ be the composite $\pi_{1} \circ p$. Then $\phi$ is a non-trivial family of $\bbF_{2}$'s.  The threefold $Y$ will essentially serve as the ambient space of the varying family of $3$-nodal elliptic curves. 

Let us understand the geometry of $Y$. The Picard group of $Y$ is $\bbZ[\zeta,f_{1},f_{2}]$, where here, and in everything that follows, we suppress pullbacks $p^{*}$'s from notation. The projection $p$ has a distinguished section  $\Sigma \subset Y$, which can be described as the union of the $(-2)$-curves in the $\bbF_{2}$'s.  The class of $\Sigma$ is 
\[\Sigma = \zeta - f_{1} - 2f_{2}.\] 
We also select a disjoint section $\Pi \subset Y$ having divisor class 
\[\Pi = \zeta.\]  
In terms of the family $\phi : Y \to \bbP^1$, the section $\Pi$ provides a family of sections complementary to the directrices of the varying $\bbF_{2}$ fibers. The projection $p$ obviously restricts to an isomorphism on the section $\Pi$, so it makes sense to refer to the rulings of $\Pi$ by $f_{1}$ and $f_{2}$ as well. 


We will now blow up $Y$ along the union of three disjoint curves. Restricted to each fiber $\bbF_2$, this amounts to blowing up three points corresponding to the nodes of the desired $3$-nodal curve $C$.  

Let $Z_{1}, Z_{2}$ and $Z_{3}$ be three disjoint curves of the ruling class $f_{2}$ on $\Pi$. Let $Z = Z_{1} \cup Z_{2} \cup Z_{3}$ and let $X = \Bl_{Z}Y$ with the blow down map $\beta : X \to Y$.  Denote by $\varphi: X \to \bbP^1$ the composite $\phi\circ\beta$. Then $\varphi$ is a family of surfaces $S_{t}$, each being the blow up of $\bbF_{2}$ at three points $z_{1}(t), z_{2}(t)$ and $z_{3}(t)$ for $t \in \bbP^1$.  The Picard group of $X$ is 
$ \bbZ[\zeta, f_{1}, f_{2}, e_{1}, e_{2}, e_{3}]$ where the $e_{i}$ are the classes of the respective exceptional divisors $E_{i}$.  Let 
$e = e_{1} + e_{2} + e_{3}$ and $E = E_{1} \cup E_{2} \cup E_{3}$. Since 
$N_{Z_i/Y}$ is isomorphic to $\OO \oplus \OO(1)$, each exceptional divisor $E_{i}$ is isomorphic to the first Hirzebruch surface $\bbF_{1}$. 

Now we consider the divisor class $3\zeta + (a-2)f_{1} - 2e$ 
on $X$, where $a \gg 0$. 
Let $l$ be the class of a line and let $r$ be  the class of a ruling on $E_i\cong \bbF_{1}$. Note that 
$\zeta|_{E_i} = f_1|_{E_i} = r$ and $e_{i}|_{E_i} = r-l$. Therefore, 
$$ \big(3\zeta + (a-2)f_{1} - 2e\big) |_{E_i} =  2l + (a-1)r. $$

\begin{lemma}\label{gen_D} 
Let $D$ be a general divisor with divisor class $3\zeta + (a-2)f_{1} -2e$ on  $X$, where $a\gg 0$.
\begin{enumerate}[label={\upshape(\roman*)}]
\item $D$ is a smooth surface in $X$. Under the map $\varphi : D \to \bbP^1$, the fibers have genus one and the singular fibers are at worst nodal. 

\item The curves $C_{i} = D \cap E_{i}$ provide smooth $2$-sections to the family $\varphi : D \to \bbP^1$ for $i=1,2,3$. Moreover, the induced double cover $\varphi: C_{i} \to \bbP^1$ has $2a$ ramification points. 

\item There are exactly $(a+1)$ singular fibers in the family $\varphi : D \to \bbP^1$ having rational tails. Every rational tail meets $C_{i}$ once for each $i$. 
\end{enumerate}
\end{lemma}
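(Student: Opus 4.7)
The plan is to treat the three assertions in order, using Bertini-type arguments on the threefold $X$ and on the surfaces $E_i \cong \bbF_1$, combined with adjunction and restriction-class computations.

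For (i), I would first verify that $\OO_X(3\zeta + (a-2)f_1 - 2e)$ is sufficiently positive for $a \gg 0$ (say, base point free off the exceptional divisors), and then apply Bertini to conclude that a general member $D$ is smooth. Next, analyzing the fibers of $\varphi: D \to \bbP^1$: each fiber $D_t = D \cap S_t$ is a divisor of class $3\tau - 2e|_{S_t}$ on $S_t = \Bl_3 \bbF_2$, and an adjunction computation on $S_t$ gives arithmetic genus $p_a(D_t) = 1$. A relative Bertini argument, together with the genericity of $D$, shows the general fiber is smooth and that the finitely many singular fibers are at worst nodal.

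For (ii), restricting $D$ to $E_i \cong \bbF_1$ gives $C_i$ of class $2l + (a-1)r$, which is very ample on $\bbF_1$ for $a \gg 0$, so Bertini yields $C_i$ smooth. The intersection $C_i \cdot r = 2$ shows $C_i$ is a $2$-section of $\varphi|_{E_i}: E_i \to \bbP^1$. Adjunction on $\bbF_1$ gives $p_a(C_i) = a-1$, and applying Riemann--Hurwitz to $\varphi|_{C_i}: C_i \to \bbP^1$ yields exactly $2a$ ramification points.

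For (iii), the key idea is to identify the rational tails with the fibers of the $\pi_1$-projection contained in the strict transform $\tilde\Pi$ of $\Pi$. Since $Z$ is Cartier on $\Pi$, the map $\beta|_{\tilde\Pi}: \tilde\Pi \to \Pi$ is an isomorphism, and $[\tilde\Pi] = \zeta - e$ in $\Pic(X)$. Using $\zeta|_{\tilde\Pi} = f_1 + 2f_2$ and $e|_{\tilde\Pi} = 3f_2$, one computes
\[
D|_{\tilde\Pi} \;=\; 3(f_1 + 2f_2) + (a-2)f_1 - 6f_2 \;=\; (a+1)f_1
\]
in $\Pic(\tilde\Pi) = \Pic(Q)$. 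For a generic $D$ this is a disjoint union of $(a+1)$ fibers $T_{t_1}, \dots, T_{t_{a+1}}$ of $\pi_1$. Each $T_{t_j}$ is rational, sits in $\varphi^{-1}(t_j) = S_{t_j}$ as the proper transform of $\Pi|_{S_{t_j}}$, with class $\tau - r_1 - r_2 - r_3$ (a $(-1)$-curve on $S_{t_j}$), and meets each $E_i$ in exactly the one point where a $\pi_1$-fiber crosses the $\pi_2$-fiber $Z_i$ in $Q$. Hence $T_{t_j} \cdot C_i = 1$ for each $i$, as required.

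The main obstacle I anticipate is establishing that no \emph{other} rational tails arise, so that the count is exactly $a+1$ and every tail has the form just described. To handle this, I would classify the numerical classes of a rational tail $T \subset D$ contained in a fiber of $\varphi$: writing $T$ in the basis $\zeta, f_1, f_2, e_1, e_2, e_3$ and imposing fiber-containment ($T \cdot f_1 = 0$), rationality, and the constraint that $T \subset D$ forces $T \cdot D < 0$, together with the positivity constraints from the ample cone of $X$ and the genericity of $D$ (ruling out, e.g., a tail concentrated in a single $E_i$, which would force $T \cdot C_j = 0$ for $j \neq i$ and contradict genericity), should pin the class of $T$ down to $\tau - r_1 - r_2 - r_3$. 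Combined with the $\tilde\Pi$-restriction count, this will yield the exact number $a+1$.
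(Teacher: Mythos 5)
Your argument follows the paper's proof in all essentials: Bertini plus adjunction on the fibers $S_t=\Bl_3\bbF_2$ and on $E_i\cong\bbF_1$ for (i) and (ii), and the computation $D|_{\Pi'}=(a+1)\wt{f}_1$ on the proper transform of $\Pi$ for (iii), which is exactly how the paper produces the $(a+1)$ rational tails and their unique intersection with each $C_i$. One substantive omission: to apply Bertini to $D\cap S_t$ and $D\cap E_i$ for a \emph{general} member $D$ of the system on $X$, you need the restriction maps $H^0(X,\OO_X(D))\to H^0(S,\OO_S(D))$ and $H^0(X,\OO_X(D))\to H^0(E_i,\OO_{E_i}(D))$ to be surjective, both so that the restrictions are general members of base-point-free systems on the fibers and so that the one-parameter family $\{D_t\}$ can be made to miss the codimension-two locus of worse-than-nodal members of $|3\tau-2e|$. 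This is where the paper spends its effort (Leray spectral sequence, vanishing of $R^1\varphi_*\OO_X(D-S)$ and of $h^1(\varphi_*\OO_X(D-S))$ for $a\gg 0$), and your ``verify sufficient positivity'' placeholder should be filled in this way.

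In your sketch ruling out other rational tails, the constraint ``$T\subset D$ forces $T\cdot D<0$'' is false: a curve lying on one member of a linear system need not meet that class negatively, and indeed for the actual tails $T$ of class $\tau-e_1-e_2-e_3$ one computes $T\cdot D=(\tau-e)\cdot(3\tau-2e)=0$. The correct constraint is the defining one: $T\cdot(D_t-T)=1$ with $p_a(T)=0$. Since $K_{S_t}=-2\tau+e$, adjunction turns this into $T\cdot(\tau-e)=-1$, so every rational tail must contain the irreducible $(-1)$-curve $\tau-e$ (the proper transform of $\Pi\cap S_t$) as a component; for general $D$ the residual class $2\tau-e$ is represented by an irreducible curve, so the tail equals $\tau-e$ and the count is exactly the $(a+1)$ you found. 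With that repair your exclusivity argument goes through and in fact makes explicit a point the paper's proof leaves as an assertion.
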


\begin{proof}
 Let $S$ be a fiber of $\varphi: X \to \bbP^1$.  So $S = \Bl_{3}\bbF_{2}$.  We first show that the restriction map \[r : H^{0}(X, \OO_{X}(D)) \to H^{0}(S, \OO_{S}(D))\] is surjective.  By the long exact sequence, it is enough to show that \[h^{1}(X, \OO_{X}(D-S) = 0.\]  

By the Leray spectral sequence for $\varphi$, $h^{1}(X, \OO_{X}(D-S)) = 0$ if 
$$h^{0}(R^{1}\varphi_{*}(\OO_{X}(D-S)) = h^{1}(\varphi_{*}\OO_{X}(D-S))=0. $$  
The sheaf $R^{1}\varphi_{*}\OO_{X}(D-S)$ vanishes, because the line bundle class $3\tau - 2e$ on $S = \Bl_{3}\bbF_{2}$ has no higher cohomology.  Furthermore, by push-pull, we have 
$$\varphi_{*}\OO_{X}(D-S)\cong \varphi_{*}(\OO_{X}(3\zeta - 2e))\otimes \OO_{\bbP^{1}}(a-3),$$ 
which has vanishing higher cohomology for $a \gg 0$. 
Therefore, $r$ is surjective. The divisor class $3\tau - 2e$ on $\Bl_{3}\bbF_{2}$ is base point free. By adjunction, curves parameterized by the linear system $|3\tau - 2e|$ have genus one. Furthermore, the locus of curves with worse than nodal singularities is of codimension $2$ in $|3\tau - 2e|$.  Therefore, (i) follows from Bertini's theorem.  

In exactly the same way, one can show that the restriction map $H^{0}(\OO_{X}(D)) \to H^{0}(\OO_{E_{i}}(D))$ is surjective when $a\gg 0$. The adjunction formula applied to the curve class $2l + (a-1)r$ implies that $C_i$ 
has genus $a-1$, thus proving (ii).

The proper transform $\Pi'$ of $\Pi$ in $X$ is isomorphic to $\Pi$ under the blow down map $\beta: X\to Y$.  The intersection $D \cdot \Pi'$ has divisor class $(a+1)\wt{f}_{1}$ in $\Pi'$, where $\wt{f}_{1}$ is the ruling class 
corresponding to $f_1$ under the isomorphism $\Pi' \cong \Pi$. It gives rise to $(a+1)$ disjoint curves that are precisely the rational tails in the family $\varphi: D \to \bbP^1$. Hence (iii) follows right away.
\end{proof}
 
 Our next goal is to compute the degree of $\delta_{\irr}$ restricted to $\varphi: D\to \bbP^1$, i.e. the number of nodal fibers in this family which do not have a rational tail.  It is equal to $12 \lambda$ by \eqref{eqn:delta}.  Before doing so, we gather some relevant intersection products in the Chow ring of $X$.
 
 \begin{lemma}\label{intersections}
 The following intersection numbers hold in the Chow ring of $X$: 
 \begin{eqnarray}\label{eq_int}
 & f_{i}^{2} = f_{2} \cdot e_{i} = \Sigma \cdot \Pi = \Sigma \cdot e_{i} =  0, \nonumber \\
 & \Pi^{3} = \Sigma^{3} = 4, \nonumber \\
 &  e_{i}^{3} = -1, \nonumber \\
 & e_{i}^2 \cdot f_{1} = e_{i}^2 \cdot \Pi = -1, \\ 
 &  \Sigma^2 \cdot f_{1} = -\Pi^2 \cdot f_{1} = -2, \nonumber \\
 &  \Sigma^2 \cdot f_{2} = -\Pi^2 \cdot f_{2} = -1, \nonumber \\
 &   \Sigma \cdot f_{1} \cdot f_{2} = \Pi \cdot f_{1} \cdot f_{2} = 1. \nonumber
 \end{eqnarray}
 
 Furthermore, let $l_{i}$ and $r_{i}$ be the line and ruling classes of $E_{i} \cong \bbF_{1}$.  Then $E_{i}\cdot {E_{i}} = r_{i} - l_{i}$ and $\Pi \cdot E_{i} = f_{1} \cdot E_{i} = r_{i}$. 
 \end{lemma}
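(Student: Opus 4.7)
The plan is to assemble the Chow ring of $X$ in stages, first computing on $Y$ via the projective bundle formula and then passing to $X$ via the blow-up formula. Since $V = \OO_Q \oplus \OO_Q(1,2)$ has $c_1(V) = f_1 + 2f_2$ and $c_2(V) = 0$, the tautological class on $Y = \mathrm{Proj}(V)$ satisfies $\zeta^2 = \zeta(f_1 + 2f_2)$; combined with $f_i^2 = 0$ on $Q$ and $\zeta \cdot f_1 \cdot f_2 = 1$ (one point per fiber of $p$), this determines every triple product among $\zeta, f_1, f_2$. The identity $\Sigma \cdot \Pi = 0$ is immediate from $\zeta(\zeta - f_1 - 2f_2) = 0$, and expanding $\zeta^3$ and $(\zeta - f_1 - 2f_2)^3$ gives $\Pi^3 = \Sigma^3 = 4$. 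The mixed products $\Pi^2 \cdot f_j$ and $\Sigma^2 \cdot f_j$ follow by restriction: $\zeta|_\Pi = f_1 + 2f_2$ and $\Sigma|_\Sigma = -(f_1 + 2f_2)$, since $\Pi$ corresponds to the quotient $V \to \OO(1,2)$ and $\Sigma$ to $V \to \OO$.

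These triple products transfer verbatim to $X$, because $\beta^*$ preserves intersection numbers among pulled-back classes. The vanishings $f_2 \cdot e_i = 0$ and $\Sigma \cdot e_i = 0$ follow from disjointness: a generic $f_2$-fiber on $Y$ pulls back from $Q$ and misses $Z_i$ (since $Z_i$ has class $f_2$ on $\Pi \cong Q$ and $f_2 \cdot f_2 = 0$); and $\Sigma$ is disjoint from $\Pi \supset Z_i$. In both cases the pullback to $X$ avoids $E_i$.

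For triple products involving $e_i$, the plan is to identify $E_i \cong \bbF_1$ as the projectivization of $N_{Z_i/Y}$ and to use the blow-up formula $e_i|_{E_i} = -\xi_i$, where $\xi_i$ is the tautological class. A short normal-bundle argument --- using $N_{Z_i/\Pi} = \OO$ (from $f_2 \cdot f_2 = 0$) and $N_{\Pi/Y}|_{Z_i} = \OO(1)$ (from $\zeta|_\Pi \cdot f_2 = 1$) --- confirms $N_{Z_i/Y} \cong \OO \oplus \OO(1)$, and translating into the $(l_i, r_i)$ basis of $\bbF_1$ gives $e_i|_{E_i} = r_i - l_i$. For any $\alpha \in \Pic(Y)$, the restriction $(\beta^*\alpha)|_{E_i}$ is pulled back from $Z_i \cong \bbP^1$, hence a multiple of $r_i$; direct intersection with $Z_i$ shows $f_1|_{E_i} = \Pi|_{E_i} = r_i$. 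The three triple products then collapse to intersection numbers on $\bbF_1$: $e_i^3 = (r_i - l_i)^2 = -1$, $e_i^2 \cdot f_1 = r_i \cdot (r_i - l_i) = -1$, and $e_i^2 \cdot \Pi = r_i \cdot (r_i - l_i) = -1$.

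The main obstacle is purely bookkeeping: correctly identifying the tautological class $\xi_i$ in the $(l_i, r_i)$ basis (which depends on the projective bundle convention used for $\mathrm{Proj}$) and verifying $N_{\Pi/Y}|_{Z_i} = \OO(1)$ (which depends on knowing $\Pi$ is the quotient section corresponding to $V \to \OO(1,2)$ rather than $V \to \OO$). Once these identifications are fixed, every remaining computation is a routine manipulation using the projective bundle and blow-up formulas.
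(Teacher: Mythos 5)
Your proposal is correct and follows essentially the same route as the paper: the relation $\zeta^{2}=c_{1}(V)\cdot\zeta$ from the projective bundle formula handles all products among $\zeta,f_{1},f_{2}$, and the identification $N_{Z_{i}/Y}\cong\OO\oplus\OO(1)$ (hence $E_{i}\cong\bbF_{1}$ with $e_{i}|_{E_{i}}=r_{i}-l_{i}$ and $\Pi|_{E_{i}}=f_{1}|_{E_{i}}=r_{i}$) handles the exceptional classes, exactly as in the paper's proof. Your added details (the splitting of $N_{Z_i/Y}$ via $N_{Z_i/\Pi}$ and $N_{\Pi/Y}|_{Z_i}$, and the restrictions $\zeta|_{\Pi}$, $\Sigma|_{\Sigma}$) are correct refinements of the same argument.
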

 
 \begin{proof}
We will prove only those which are not immediately clear.  First, we see that \[\Pi^2 = (f_{1} + 2f_{2})\cdot \Pi\] which follows from the general fact that \[\zeta^{r} = \sum_{i = 1}^{r} (-1)^{i+1}c_{i}(V) \cdot \zeta^{r-i}\] for any projective bundle ${\rm Proj\,} V$, where $V$ is a rank $r$ vector bundle and $\zeta$ is the universal line bundle class of ${\rm Proj\,} V$. Here in our case $\Pi$ has class $\zeta$. From this, one easily derives the formulas $\Pi^{3} = \Sigma^{3} = 4,$ and all other formulas involving only $\Pi, \Sigma, f_{1}$ and $f_{2}$.

We now deal with the intersections involving the exceptional divisors $E_{i}$. If $W\subset G$ is a smooth subscheme of a variety $G$ and if ${\wt G} = \Bl_{W}G$, then the exceptional divisor $E \subset {\wt G}$ is isomorphic to ${\rm Proj \,} (I_{W}/I^{2}_{W})$, and $\OO_{E}(E) = \OO_{E}(-1)$ has class $r-l$.  In our setting, the conormal bundle of $Z_{i}$ in $Y$ is $\OO_{Z_{i}} \oplus \OO_{Z_{i}}(-1)$.  Therefore, $E_{i}$ is abstractly isomorphic to $\bbF_{1}$, and $\OO_{E_{i}}(1)$ is the line bundle associated to the directrix $\sigma \subset \bbF_{1}$. The class of the directrix is also given by $l_{i} - r_{i}$, and therefore $E_{i} \cdot E_{i} = r_{i} - l_{i}$.  Furthermore, since $\Pi \cdot Z_{i} = f_{1} \cdot Z_{i} =  1$ in $Y$, we conclude that $\Pi \cdot E_{i} = f_{1} \cdot E_{i} = r_{i}$.   

The remaining intersection products follow from those explained above.
 \end{proof}
 
 \begin{lemma}\label{lambda}
 For the family $\varphi : D \to \bbP^1$, $\delta_{\irr} = 12\lambda = 12(a-1)$.
 \end{lemma}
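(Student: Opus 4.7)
The plan is to use the identity $\delta_{\irr}=12\lambda$ from \eqref{eqn:delta}, which reduces the claim to showing $\lambda = a-1$ for the family $\varphi:D\to\bbP^1$. Combining the Leray spectral sequence for $\varphi$ with $\varphi_{*}\OO_{D}=\OO_{\bbP^1}$ (connected genus-one fibers) and the relative Serre duality identification $R^{1}\varphi_{*}\OO_{D}\cong(\varphi_{*}\omega_{\varphi})^{\vee}$, one finds $\chi(\OO_{D})=\lambda$, so the target becomes $\chi(\OO_{D})=a-1$. Note that attempting Noether/Mumford $12\lambda=\omega_{\varphi}^{2}+\delta_{\mathrm{tot}}$ would only reproduce the tautology $\delta_{\irr}=12\lambda$, so one really does need an independent handle on $\chi(\OO_D)$.

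To compute $\chi(\OO_{D})$, I would use the short exact sequence $0\to\OO_{X}(-D)\to\OO_{X}\to\OO_{D}\to 0$ on the rational threefold $X$. Since $\chi(\OO_{X})=1$ and Serre duality on $X$ gives $\chi(\OO_{X}(-D))=-\chi(\OO_{X}(D+K_{X}))$, the task reduces to computing $\chi(\OO_{X}(D+K_{X}))$. A direct computation of the canonical class from $K_{Y}=p^{*}K_{Q}+K_{Y/Q}=-2\zeta-f_{1}$ on the $\bbP^{1}$-bundle $Y\to Q$, together with $K_{X}=\beta^{*}K_{Y}+E$ on the blowup, yields $D+K_{X}=\zeta+(a-3)f_{1}-e$.

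The last step is to evaluate $\chi(\OO_{X}(\zeta+(a-3)f_{1}-e))$ by pushing the line bundle down through the tower $X\xrightarrow{\beta}Y\xrightarrow{p}Q\xrightarrow{\pi_{1}}\bbP^{1}$. Setting $M:=\zeta+(a-3)f_{1}$, after verifying the higher direct images under $\beta$ vanish, the pushforward to $Y$ is $\OO_{Y}(M)\otimes\II_{Z}$. The ideal sheaf sequence on $Y$ splits the Euler characteristic as $\chi(\OO_{Y}(M))-\chi(\OO_{Z}(M|_{Z}))$. Both terms are computable: using $p_{*}\OO_{Y}(\zeta)=V=\OO\oplus\OO(1,2)$ and the bidegree calculus on $Q$ gives $\chi(\OO_{Y}(M))=4a-5$, while $\zeta|_{\Pi}=f_{1}+2f_{2}$ implies $M|_{Z_{i}}$ has degree $a-2$ on each $Z_{i}\cong\bbP^{1}$, giving $\chi(\OO_{Z}(M|_{Z}))=3(a-1)$. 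The difference is $a-2$, as needed, so $\chi(\OO_{D})=1-(2-a)=a-1$.

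The main technical hurdle I anticipate is correctly pinning down the conventions for $Y=\mathrm{Proj}(V)$ so that $K_{Y}$ and $p_{*}\OO_{Y}(\zeta)$ carry the right signs and twists (a useful check is that the identification $[\Pi]=\zeta-p^{*}c_{1}(\ker(V\to\OO(1,2)))=\zeta$ reproduces the class assignments made right before Lemma \ref{gen_D}), and verifying the required higher direct image vanishings along the tower. Once these are in place, Lemma \ref{intersections} and standard projection formulas carry the calculation through.
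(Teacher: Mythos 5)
Your proof is correct, but it takes a genuinely different route from the paper's. Both arguments begin the same way, reducing the lemma via $\delta_{\irr}=12\lambda$ and $\lambda=\chi(\OO_D)$ to the claim $\chi(\OO_D)=a-1$. The paper then applies Noether's formula $12\chi(\OO_D)=K_D^2+c_2(T_D)$ on the surface $D$ itself: $K_D^2=-(a+1)$ comes from the genus-one fibration with $a+1$ rational tails, while $c_2(T_D)=13a-11$ is extracted from $c_2(T_X)$ via the blowup and relative-tangent sequences together with Grothendieck--Riemann--Roch for the inclusion of the exceptional divisor. You instead compute $\chi(\OO_D)$ directly from the ideal-sheaf sequence $0\to\OO_X(-D)\to\OO_X\to\OO_D\to0$, Serre duality on the threefold, and a pushforward of $\OO_X(D+K_X)=\OO_X(\zeta+(a-3)f_1-e)$ down the tower $X\to Y\to Q$; I checked the intermediate values ($K_Y=-2\zeta-f_1$, $\chi(\OO_Y(M))=4a-5$, $\deg M|_{Z_i}=a-2$, hence $\chi(\OO_X(D+K_X))=a-2$ and $\chi(\OO_D)=a-1$) against the conventions fixed in Lemma~\ref{intersections}, and they are consistent. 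Your approach buys a computation entirely in terms of Euler characteristics of line bundles -- no second Chern classes, no GRR -- at the cost of the (routine) higher-direct-image identifications along the tower; the paper's approach stays closer to the intersection calculus it has already set up in Lemma~\ref{intersections} and reuses $c_2(T_X)$-type data. One small quibble: your parenthetical dismissing the ``Noether'' route conflates Mumford's relation $12\lambda=\kappa+\delta$ with Noether's formula for the surface $D$; the latter is not tautological here because $c_2(T_D)$ is the topological Euler number of $D$ and is computed independently from the ambient threefold, which is exactly what the paper does.
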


\begin{proof}
First observe that $\lambda = \chi(\OO_{D})$. Secondly, by Noether's formula we know that 
$$12\chi(\OO_{D}) = K_{D}^2 + c_{2}(T_{D}).$$ 
Since the fibers of $\varphi$ have genus one, the intersection number $K_{D}^2$ is $-(a+1)$, where each rational tail contributes $-1$. 

So we need only compute $c_{2}(T_{D})$. Using the exact sequence 
\[0 \to T_{D} \to T_{ X}|_{D} \to N_{D/{X}} \to 0, \] 
we see that 
\begin{eqnarray}\label{c2T}
c_{2}(T_{D}) & = & c_{2}(T_{X}|_{D}) - c_{1}(T_{D}) \cdot c_{1}(N_{D/X}) \nonumber \\ 
& = & c_{2}(T_{X}|_{D}) + K_{D}\cdot c_{1}(N_{D/X}) \nonumber \\
& = & c_{2}(T_{X})\cdot D + (D + K_{X})\cdot D^2. 
\end{eqnarray}
Equality \eqref{c2T} indicates that we need only compute $c_{2}(T_{X})$, and then we can proceed by using the intersection formulas provided in 
Lemma~\ref{intersections}. Consider the sequence 
\begin{equation}\label{blowup}
0 \to T_{X} \to \beta^{*}T_{Y} \to T_{E/Z}(E) \to 0, 
\end{equation}
from which we conclude 
\begin{equation}\label{c2TX}
c_{2}(T_{X}) = i_{*}[-\omega_{E/Z}] + E \cdot K_{X} + \beta^{*}c_{2}(T_{Y})
\end{equation}
where $i: E \to X$ is the inclusion map. For \eqref{c2TX}, we can use the Grothendieck-Riemann-Roch formula for the inclusion $i$ to deduce 
$$c_{1}(T_{E/Z}(E)) = E, $$ 
$$c_{2}(T_{E/Z}(E)) = i_{*}[c_{1}(\omega_{E/Z})] = r-2l.$$ 
Using the exact sequence 
\[ 0 \to \OO_{Y}(\Sigma+\Pi) \to T_{Y} \to p^{*}(T_{Q}) \to 0, \] we obtain
\begin{equation}\label{rel_tangent}
c_{2}(T_{Y}) = -K_{Y}\cdot (\Sigma + \Pi) + 4f_{1} \cdot f_{2}.
\end{equation}
   Finally, we can compute the quantity $c_{2}(T_{D})$ using the equality \eqref{c2T} and the intersection numbers given in Lemma~\ref{intersections}.   The end result is
\begin{equation}\label{delta}
c_{2}(T_{D}) = 13a-11. 
\end{equation}
Therefore, $12\lambda = -(a+1) + 13a-11 = 12a-12$ as claimed.
\end{proof}

There are three types of singular fibers in the genus one family $\varphi: D \to \bbP^1$: 
\begin{enumerate}[label={\upshape(\roman*)}]

\item Irreducible nodal fibers. 

\item Fibers having a rational tail. The rational tail meets each $2$-section $C_{i}$ once.

\item Two $\bbP^1$'s joined at two nodes. One component will be the directrix $\sigma \subset \Bl_{3}\bbF_{2}$. The other component will intersect each $C_{i}$ twice.
\end{enumerate}

In order to obtain a family of genus one curves with six ordered marked points, we need to perform a base change $T \to \bbP^1$ of degree $2^3 = 8$ to distinguish $p_{2k-1}$ and $p_{2k}$ that are glued as a node for $k=1,2,3$. 
We name the sections so that the marked points $\{p_{2k-1},p_{2k}\}$ correspond to the $2$-section $C_{k}$. 
Then the curve $T$ carries over it a family of genus one curves with six ordered marked points.  

Using Lemmas~\ref{gen_D}, \ref{lambda} and the above analysis of singular fibers of $\varphi$, we obtain the following intersection numbers: 
\begin{enumerate}[label={\upshape(\roman*)}]
\item $T \cdot \lambda = 8(a-1)$. 
\item $T \cdot \delta_{0;\{i,j,k\}} = a+1$ for each triple $(i,j,k) \in \{1,2\} \times \{3,4\} \times \{5,6\}$. 
\item $T \cdot \delta_{0;\{2k-1,2k\}} = 4 \cdot (2a) = 8a$ for $k = 1, 2, 3$. 
\end{enumerate}

All intersections of $T$ with other boundary divisors of $\BM_{1,6}$ are trivial. Plugging these numbers into the expression of $\pi^{*}(GP)$ in \eqref{eqn:gp} yields: 
\[T\cdot (\pi^{*}GP) = -16.\] 

\begin{theorem}
\label{thm:GP}
The divisor $\wt{GP}$ is extremal in $\BEff(\BM_{1,6})$. For $n\geq 6$, the pullback of 
$\wt{GP}$ to $\BM_{1,n}$ spans an extremal ray of $\BEff(\BM_{1,n})$, which is different from 
the rays spanned by $D_{\bf a}$'s. 
\end{theorem}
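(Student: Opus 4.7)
The strategy is to reuse the moving curve $T$ constructed above and apply the extremality/rigidity criterion of \cite[Lemma 4.1]{ChenCoskun}, in direct analogy with the proofs of Theorems~\ref{thm:trigonal-extremal} and \ref{thm:d-gonal}. Write $\pi^{*}GP = \wt{GP} + \overline{GP}$, where $\overline{GP}$ collects the boundary divisors of $\BM_{1,6}$ contained in $\pi^{-1}GP$. The generic fiber of the family producing $T$ is a smooth genus one curve with six distinct marked points (by parts (i)--(iii) of Lemma~\ref{gen_D}), so $T$ is not entirely contained in the boundary of $\BM_{1,6}$ and therefore $T \cdot \overline{GP} \geq 0$. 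Combined with the already-computed intersection $T \cdot \pi^{*}GP = -16$, this gives $T \cdot \wt{GP} < 0$. To invoke the cited criterion I must also check that $T$ is a moving curve in $\wt{GP}$; this will follow by varying the three disjoint sections $Z_1, Z_2, Z_3 \subset \Pi$ of ruling class $f_2$ and the divisor $D$ within the large linear system $|3\zeta + (a-2)f_1 - 2e|$, since these together recover the full moduli of marked $3$-nodal elliptic curves on $\bbF_2$ described before Lemma~\ref{gen_D}.

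For $n \geq 6$, I will extend extremality to $f^{*}\wt{GP}$ under the forgetful morphism $f: \BM_{1,n} \to \BM_{1,6}$ exactly as in the proof of Theorem~\ref{thm:trigonal-extremal}: intersect $f^{-1}T$ with $n-6$ general very ample divisors to produce a curve $T' \subset f^{-1}\wt{GP}$ whose pushforward $f_{*}T'$ is a positive multiple of $T$, then apply the projection formula to obtain $T' \cdot f^{*}\wt{GP} < 0$. Varying $T$ in $\wt{GP}$ and the very ample divisors ensures that $T'$ is moving in $f^{-1}\wt{GP}$, so $f^{*}\wt{GP}$ is extremal and rigid.

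Finally I will distinguish $f^{*}\wt{GP}$ from every $D_{\bf a}$. Since both divisors are rigid, it is enough to show their supports differ. Assume for contradiction that $f^{-1}\wt{GP} = D_{\bf a}$ set-theoretically; since $\wt{GP}$ places no condition on $p_7, \ldots, p_n$, we must have $a_i = 0$ for $i > 6$. The swap $p_{2k-1} \leftrightarrow p_{2k}$ within each node-pair preserves $\wt{GP}$, hence preserves $D_{\bf a}$, forcing $a_{2k-1} = a_{2k}$ for $k = 1,2,3$ (the alternative $a_{2k-1} = -a_{2k}$ with all other coordinates zero pushes $D_{\bf a}$ into a boundary component). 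Writing $b_k = a_{2k-1} = a_{2k}$, the permutation symmetry among the three node-pairs combined with $b_1+b_2+b_3 = 0$ forces all $b_k = 0$, contradicting $\gcd(a_1,\ldots,a_n) = 1$. The only genuinely nonroutine step in this plan is confirming that $T$ sweeps $\wt{GP}$ densely; I expect a dimension count comparing the parameters carried by $(Z_1, Z_2, Z_3, D)$ against $\dim \wt{GP} = 7$ to make this transparent, so the main obstacle is bookkeeping rather than geometric.
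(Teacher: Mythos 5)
Your proposal follows essentially the same route as the paper: decompose $\pi^{*}GP$ into $\wt{GP}$ plus boundary components, combine $T\cdot \pi^{*}GP=-16$ with the facts that $T$ is moving and not contained in the boundary to get $T\cdot \wt{GP}<0$, then transfer to $\BM_{1,n}$ via the forgetful map and rule out the $D_{\bf a}$ using the symmetry among the three node-pairs, exactly as in the proof of Theorem~\ref{thm:trigonal-extremal}. One minor slip in your closing aside: $\dim \BM_{1,6}=6$, so $\dim \wt{GP}=5$ rather than $7$, but this does not affect the argument.
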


\begin{proof}
Since $T$ parameterizes curves in the main component ${\wt{GP}}$, we conclude that 
$$T \cdot {\wt{GP}} \leq T\cdot (\pi^{*}GP) < 0.$$  
Furthermore, the family $T$ is evidently moving in $\wt{GP}$, hence $\wt{GP}$ is an extremal divisor. Using the symmetry between the three pairs of marked points $\{p_{2k-1}, p_{2k}\}$ for $k=1,2,3$, the same argument as in the proof of Theorem~\ref{thm:trigonal-extremal} shows the second part of the claim. 
\end{proof}


\end{document}